\title{On Symmetric Polynomials}
\author{Ryan Golden
        \and
        Ilwoo Cho\thanks{
                      Saint Ambrose Univ., Dept. of Math., 518 W. Locust St., Davenport, Iowa,
                      52803, USA
                  }\thanks{This paper is one of the results in the undergraduate research program of
                  St. Ambrose University, Department of Mathematics and Statistics. The
                  second-named author thanks the first-named author, for his hard works and
                  endless passion.}
        }
\newtheorem{theorem}{Theorem}[section]
\newtheorem{corollary}[theorem]{Corollary}
\newtheorem{proposition}[theorem]{Proposition}
\newtheorem{example}{Example}[section]
\newtheorem{definition}{Definition}[section]
\newenvironment{proof}{{\sc Proof:}}{~\hfill QED}
\newenvironment{AMS}{}{}
\newenvironment{keywords}{}{}
\begin{document}
\newpage
\maketitle
\begin{abstract}
    In this paper, we study structure theorems of algebras of symmetric
    functions. Based on a certain relation on elementary symmetric polynomials
    generating such algebras, we consider perturbation in the algebras. In
    particular, we understand generators of the algebras as perturbations. From
    such perturbations, define injective maps on generators, which induce
    algebra-monomorphisms (or embeddings) on the algebras. They provide
    inductive structure theorems on algebras of symmetric polynomials. As
    application, we give a computer algorithm, written in JAVA v. 8, for finding
    quantities from elementary symmetric polynomials.
\end{abstract}

\begin{keywords}
   Symmetric Functions, Elementary Symmetric Polynomials, Symmetric
Subalgebras, Perturbations.
\end{keywords}

\begin{AMS}
   05A19, 30E50, 37E99, 44A60.
\end{AMS}

\section{Introduction}
In this paper, we study structure theorems of algebras generated by
symmetric polynomials with commutative multi-variables. By establishing
certain recurrence relations on symmetric polynomials, we prove our
structure theorems. As application, we consider how to construct a degree-$%
(n+1)$ single-variable polynomial $f_{t_{0}}(z)$ from a given degree-$n$
polynomial $f(z)$ by adding a zero $t_{0},$ and we provide a computer
algorithm, written in the computer language JAVA version 8; for finding
quantities obtained from elementary symmetric polynomials.

Throughout this paper, fix $n$  $\in$ $\mathbb{N},$ with additional condition: $n$ $>$ $1,$ and let $x_{1},$ ..., $x_{n}$ be arbitrary commutative \em{variables} (or \em{indeterminants}), for $n$ $\in $ $\mathbb{N}.$ Then one
can have an algebra

(1.1)

\begin{center}
	$\mathcal{A}_{x_{1},...,x_{n}}$ $=$ $\mathbb{C}\left[ \{x_{1},..., x_{n}\}\right] ,$
\end{center}

consisting of all $n$-variable polynomials in $x_{1},$ ..., $x_{n}.$ We call 
$\mathcal{A}_{x_{1},...,x_{n}}$ of (1.1), the $n$-\em{variable polynomial
	algebra}.

i.e., if $f$ $\in $ $\mathcal{A}_{x_{1},...,x_{n}},$ then it is expressed by

\begin{center}
	$f$ $=$ $f(x_{1},$ ..., $x_{n})$ $=$ $t_{0}+$ $\sum_{j=1}^{k}$ $\underset{(r_{1},...,r_{j})\in \{1,...,n\}^{j}}{\sum }$ $t_{(r_{1},...,r_{j})}$ $\left(\overset{j}{\underset{i=1}{\Pi}}x_{r_{i}}\right) ,$
\end{center}

with $t_{0},$ $t_{(r_{1},...,r_{j})}$ $\in $ $\mathbb{C},$ for $k$ $\in $ $\mathbb{N}.$

Let $X$ be a finite set. The \em{symmetric group }$S_{X}$\em{\ on} $X$
is a group under the usual functional composition consisting of all
bijective maps, called \em{permutations}, on $X.$ If $X$ $=$ $\{1,$ ..., $%
n\},$ then we denote $S_{X}$ simply by $S_{n},$ for $n$ $\in $ $\mathbb{N}.$

\begin{definition}
	An element $f$ of $\mathcal{A}_{x_{1},...,x_{n}}$ of (1.1) is said to be
	symmetric, if
	
	\begin{center}
		$f(x_{1},$ $x_{2},$ $...,$ $x_{n})$ $=$ $f\left( x_{\sigma (1)},\text{ }%
		x_{\sigma (2)},\text{ ..., }x_{\sigma (n)}\right) ,$
	\end{center}
	
	for all $\sigma $ $\in $ $S_{n},$ where $S_{n}$ is the symmetric group on $%
	\{1,$ ..., $n\}.$
	
	Define now the subset $\mathcal{S}_{x_{1},...,x_{n}}$ of $\mathcal{A}%
	_{x_{1},...,x_{n}}$ by
	
	(1.2)
	
	\begin{center}
		$\mathcal{S}_{x_{1},...,x_{n}}$ $=$ $\{f$ $\in $ $\mathcal{A}%
		_{x_{1},...,x_{n}}$ $:$ $f$ is symmetric$\}.$
	\end{center}
	
	We call $\mathcal{S}_{x_{1},...,x_{n}},$ the symmetric subalgebra of $%
	\mathcal{A}_{x_{1},...,x_{n}}.$
\end{definition}

\strut It is not difficult to check that,

\strut

(1.3)\qquad $f_{1},$ $f_{2}$ $\in $ $\mathcal{S}_{x_{1},...,x_{n}}$ $%
\Longrightarrow $ $f_{1}+f_{2}$ $\in $ $\mathcal{S}_{x_{1},...,x_{n}},$

(1.4) $\qquad f$ $\in $ $\mathcal{S}_{x_{1},...,x_{n}}$ and $t$ $\in $ $\mathbb{%
	C}$ $\Longrightarrow $ $tf$ $\in $ $\mathcal{S}_{x_{1},...,x_{n}},$

\strut

and hence, the subset $\mathcal{S}_{x_{1},...,x_{n}}$ of (1.2) forms a
well-defined vector subspace of $\mathcal{A}_{x_{1},...,x_{n}}$ over $\mathbb{C}%
,$ by (1.3) and (1.4).

Moreover, one has

\strut

(1.5) \qquad $f_{1},$ $f_{2}$ $\in $ $\mathcal{S}_{x_{1},...,x_{n}}$ $%
\Longrightarrow $ $f_{1}f_{2}$ $\in $ $\mathcal{S}_{x_{1},...,x_{n}},$

\strut

where $f_{1}f_{2}$ means the usual functional multiplication of $f_{1}$ and $f_{2}$ in $\mathcal{A}_{x_{1},...,x_{n}}.$

Since

\begin{center}
	$f_{1}\left( f_{2}f_{3}\right) =(f_{1}f_{2})f_{3}$ $\in $ $\mathcal{S}%
	_{x_{1},...,x_{n}},$
	
	\strut $f_{1}\left( f_{2}+f_{3}\right) $ $=$ $f_{1}f_{2}+f_{1}f_{3}$ $\in $ $%
	\mathcal{S}_{x_{1},...,x_{n}},$
\end{center}

and

\begin{center}
	$(f_{1}+f_{2})f_{3}$ $=$ $f_{1}f_{3}+f_{2}f_{3}$ $\in $ $\mathcal{S}%
	_{x_{1},...,x_{n}},$
\end{center}

by (1.5), whenever $f_{1},$ $f_{2},$ $f_{3}$ $\in $ $\mathcal{S}%
_{x_{1},...,x_{n}},$ the subspace $\mathcal{S}_{x_{1},...,x_{n}}$ indeed
forms a well-determined subalgebra of $\mathcal{A}_{x_{1},...,x_{n}}.$

\strut Let's define the following functions

(1.6)

\begin{center}
	$\varepsilon _{k}(x_{1},...,x_{n})$ $=$ $\underset{i_{1}<i_{2}<...<i_{k}\in
		\{1,...,n\}}{\sum }$ $\left( \overset{k}{\underset{l=1}{\Pi }}\text{ }%
	x_{i_{l}}\right) $
\end{center}

\strut

in $\mathcal{A}_{x_{1},...,x_{n}},$ \strut \strut for all $k$ $=$ $1,$ ..., $%
n.$ Then they are symmetric in $\mathcal{A}_{x_{1},...,x_{n}}.$ i.e.,

$\qquad \qquad \qquad \varepsilon _{1}(x_{1},...,x_{n})$ $=$ $\sum_{j=1}^{n}$
$x_{j},$

$\qquad \qquad \qquad \varepsilon _{2}(x_{1},...,x_{n})$ $=$ $\underset{%
	i_{1}<i_{2}\in \{1,...,n\}}{\sum }$ $x_{i_{1}}x_{i_{2}},$

$\qquad \qquad \qquad \varepsilon _{3}(x_{1},...,x_{n})$ $=$ $\underset{%
	i_{1}<i_{2}<i_{3}\in \{1,...,n\}}{\sum }$ $x_{i_{1}}x_{i_{2}}x_{i_{3}},$

$\cdot \cdot \cdot ,$

$\qquad \qquad \qquad \varepsilon _{n}(x_{1},$ ..., $x_{n})$ $=$ $\overset{n%
}{\underset{j=1}{\Pi }}x_{j}$

are elements of $\mathcal{S}_{x_{1},...,x_{n}}.$

\begin{definition}
	We call such polynomials $\varepsilon _{k}(x_{1},...,x_{n})$ of (1.6), the $%
	k $-\em{th elementary symmetric polynomials of }$\mathcal{S}%
	_{x_{1},...,x_{n}},$ for all $k$ $=$ $1,$ ..., $n.$
\end{definition}

\strut

\textbf{Notation 1.1} In the rest of this paper, we denote $\varepsilon
_{k}(x_{1},...,x_{n})$ simply by $\varepsilon _{k}^{1,...,n},$ for all $k$ $%
= $ $1,$ ..., $n.$ Also, for convenience, define

\begin{center}
	$\varepsilon _{0}^{1,...,n}$ $=$ $1$ and $\varepsilon _{n+i}^{1,...,n}$ $=$ $%
	0,$
\end{center}

as constant functions in $\mathcal{S}_{x_{1},...,x_{n}}$, for all $i$ $\in $ 
$\mathbb{N}.$ Whenever we want to emphasize the variables of $\varepsilon
_{k}^{1,...,n}$ precisely, we denote them by $\varepsilon
_{k}^{x_{1},...,x_{n}},$ for $k$ $=$ $1,$ ..., $n.$ $\square $

\strut

The following proposition is well-known under its name: \em{Fundamental
	Theorem of Symmetric Functions}.

\begin{proposition}
	(See [3]) Let $\mathcal{S}_{x_{1},...,x_{n}}$ be the symmetric subalgebra
	(1.2) of the $n$-variable polynomial algebra $\mathcal{A}_{x_{1},...,x_{n}}$
	of (1.1). Then
	
	(1.7)
	
	\begin{center}
		$\mathcal{S}_{x_{1},...,x_{n}}$ $\overset{\text{Alg}}{=}$ $\mathbb{C}\left[
		\{\varepsilon _{1}^{1,...,n},\text{ }\varepsilon _{2}^{1,...,n},\text{ ..., }%
		\varepsilon _{n}^{1,...,n}\}\right] ,$
	\end{center}
	
	where ``$\overset{\text{Alg}}{=}$'' means ``being algebra-isomorphic,'' and
	where $\varepsilon _{k}^{1,...,n}$ are the elementary symmetric polynomials
	in the sense of (1.6), for all $k$ $=$ $1,$ ..., n. $\square$ 
\end{proposition}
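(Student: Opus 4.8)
The plan is to prove the Fundamental Theorem of Symmetric Functions in two parts: first that the elementary symmetric polynomials $\varepsilon_1^{1,\dots,n},\dots,\varepsilon_n^{1,\dots,n}$ generate $\mathcal{S}_{x_1,\dots,x_n}$ as an algebra over $\mathbb{C}$, and second that they are algebraically independent, so that the natural surjection from the abstract polynomial algebra $\mathbb{C}[y_1,\dots,y_n]$ onto $\mathcal{S}_{x_1,\dots,x_n}$ sending $y_k \mapsto \varepsilon_k^{1,\dots,n}$ is in fact an isomorphism. Together these give the algebra-isomorphism claimed in (1.7).

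For the generation part, I would argue by induction on a monomial ordering. Equip the monomials $x_1^{a_1}\cdots x_n^{a_n}$ with the lexicographic order. Given a nonzero symmetric polynomial $f$, let $c\, x_1^{a_1}\cdots x_n^{a_n}$ be its leading term; symmetry of $f$ forces the exponent sequence to be weakly decreasing, $a_1 \geq a_2 \geq \cdots \geq a_n \geq 0$ (otherwise applying a transposition to the variables would produce a lexicographically larger monomial also occurring in $f$). One then checks that the product $\varepsilon_1^{a_1-a_2}\,\varepsilon_2^{a_2-a_3}\cdots \varepsilon_{n-1}^{a_{n-1}-a_n}\,\varepsilon_n^{a_n}$ (all superscripts $1,\dots,n$ suppressed) has exactly the same leading monomial $x_1^{a_1}\cdots x_n^{a_n}$. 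Subtracting $c$ times this product from $f$ yields a symmetric polynomial with strictly smaller leading monomial; since the lex order is a well-order on monomials of bounded total degree, this process terminates, expressing $f$ as a polynomial in the $\varepsilon_k$. This shows $\mathcal{S}_{x_1,\dots,x_n} = \mathbb{C}[\varepsilon_1^{1,\dots,n},\dots,\varepsilon_n^{1,\dots,n}]$ as subalgebras of $\mathcal{A}_{x_1,\dots,x_n}$.

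For algebraic independence, suppose $P(y_1,\dots,y_n)$ is a nonzero polynomial with $P(\varepsilon_1^{1,\dots,n},\dots,\varepsilon_n^{1,\dots,n}) = 0$; I would derive a contradiction by examining leading terms again. Assign to the abstract variable $y_k$ the weight $k$, and to a monomial $y_1^{b_1}\cdots y_n^{b_n}$ the ``pictorial'' monomial obtained by substituting the leading terms $y_k \mapsto x_1\cdots x_k$; distinct monomials in the $y_i$ of the same weight map (via the substitution $y_k\mapsto \varepsilon_k$) to symmetric polynomials whose lex-leading monomials are distinct, so no cancellation among the top-weight part of $P(\varepsilon_1,\dots,\varepsilon_n)$ can occur, forcing $P = 0$. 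Hence the surjection $\mathbb{C}[y_1,\dots,y_n] \to \mathcal{S}_{x_1,\dots,x_n}$ is injective, completing the isomorphism.

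The main obstacle is the bookkeeping in the generation step: verifying precisely that the monomial product $\varepsilon_1^{a_1-a_2}\cdots\varepsilon_n^{a_n}$ has leading term $x_1^{a_1}\cdots x_n^{a_n}$ under lex order, and that this operation strictly decreases the leading monomial, requires a careful argument that multiplication of polynomials multiplies leading terms with respect to lex order, together with the combinatorial claim that the leading term of $\varepsilon_k$ is $x_1 x_2 \cdots x_k$. Since this is a classical result, I would either cite [3] for the detailed verification or present the leading-term computations compactly; the structural outline (well-ordering $+$ leading-term cancellation for surjectivity, leading-term incompressibility for injectivity) is the essential content.
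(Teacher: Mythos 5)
Your proposal is correct: the lexicographic leading-term induction for generation (using that the leading monomial of $\varepsilon_1^{a_1-a_2}\varepsilon_2^{a_2-a_3}\cdots\varepsilon_n^{a_n}$ is exactly $x_1^{a_1}\cdots x_n^{a_n}$) together with the leading-monomial injectivity argument for algebraic independence is the classical proof of the Fundamental Theorem of Symmetric Functions. Note that the paper itself gives no proof of this Proposition at all --- it simply cites the lecture notes [3] and closes the statement with a box --- so you have supplied precisely the standard argument the authors defer to, and your outline (well-ordering plus cancellation of leading terms for surjectivity, incompressibility of leading monomials for injectivity) is sound; the only details to write out carefully are the compatibility of lex order with multiplication and the fact that the leading term of $\varepsilon_k^{1,\dots,n}$ is $x_1x_2\cdots x_k$, both of which you correctly flagged.
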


\em The above structure theorem (1.7) shows that all symmetric functions
in $\mathcal{A}_{x_{1},...,x_{n}}$ are generated by the elementary symmetric
polynomials $\{\varepsilon _{k}^{1,...,n}\}_{k=1}^{n}$ of (1.6).

For more about symmetric functions and related studies in mathematics, see
e.g., [2], [3], and cited papers therein.

\section{A Certain Relation on $\mathcal{S}_{x_{1},...,x_{n}}$}
	In this section, we establish a recurrence relation on the elementary
    symmetric polynomials $\{\varepsilon _{k}^{1,...,n}\}_{k=1}^{n}$ generating
    the symmetric subalgebra $\mathcal{S}_{x_{1},...,x_{n}}$ of the $n$-variable
    polynomial algebra $\mathcal{A}_{x_{1},...,x_{n}}.$
    
    \strut As in \textbf{Notation 1.1}, for any $k$ $\in $ $\{1,$ ..., $n\},$
    
    \begin{center}
    	$\varepsilon _{k}^{1,...,n}$ $=$ $\varepsilon _{k}(x_{1},...,x_{n}).$
    \end{center}
    
    So, if we write $\varepsilon _{k}^{i_{1},...,i_{t}},$ for $i_{j}$ $\in $ $%
    \{1,$ ..., $n\}$, $j$ $=$ $1,$ ..., $t,$ with $t$ $\leq $ $n$ in $\mathbb{N},$
    then it means
    
    (2.1)
    
    \begin{center}
    	$\varepsilon _{k}^{i_{1},...,i_{t}}$ $=$ $\left\{ 
    	\begin{array}{ll}
    	\varepsilon _{k}(x_{i_{1}},...x_{i_{t}}) & \text{if }k\leq t \\ 
    	0 & \text{if }k>t,
    	\end{array}
    	\right. $
    \end{center}
    
    Moreover, nonzero new elementary symmetric polynomials
    
    \begin{center}
    	$\varepsilon _{1}^{i_{1},...,i_{t}},$ ..., $\varepsilon
    	_{t}^{i_{1},...,i_{t}}$
    \end{center}
    
    are understood as the symmetric functions generating $\mathcal{S}%
    _{x_{i_{1}},...,x_{i_{t}}}$ in $\mathcal{A}_{x_{i_{1}},...,x_{i_{t}}}.$
    
    For example, if we write $\varepsilon _{k}^{1,...,n-1},$ for $k$ $=$ $1,$
    ..., $n-1,$ then they are the elementary symmetric polynomials generating $%
    \mathcal{S}_{x_{1},...,x_{n-1}}$ in $\mathcal{A}_{x_{1},...,x_{n-1}}$; if we
    write $\varepsilon _{1}^{t},$ then it is a single-variable function $x_{t}$
    in $\mathcal{A}_{x_{t}}$ $=$ $\mathbb{C}[x_{t}].$
    
    \begin{theorem}
    	Let $\varepsilon _{k}^{1,...,n}$ be the elementary symmetric polynomials
    	generating the symmetric subalgebra $\mathcal{S}_{x_{1},...,x_{n}}$. Then
    	
    	(2.2)
    	
    	\begin{center}
    		$\varepsilon _{k}^{1,...,n}$ $=$ $\varepsilon _{k}^{1,...,n-1}$ $+$ $%
    		\varepsilon _{k-1}^{1,...,n-1}$ $\varepsilon _{1}^{n},$
    	\end{center}
    	
    	where the summands and the factors of the right-hand side of (2.2) are in
    	the sense of (2.1).
    \end{theorem}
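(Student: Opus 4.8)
The plan is to prove the identity by partitioning the index sets that define $\varepsilon_k^{1,\dots,n}$ according to whether or not the variable $x_n$ appears in a given monomial. Recall from (1.6) that
\begin{center}
$\varepsilon_k^{1,\dots,n} = \underset{i_1<i_2<\cdots<i_k\in\{1,\dots,n\}}{\sum}\ \left(\overset{k}{\underset{l=1}{\Pi}}\ x_{i_l}\right),$
\end{center}
so the sum runs over all $k$-element subsets $\{i_1,\dots,i_k\}$ of $\{1,\dots,n\}$. Split this collection of subsets into two disjoint classes: those that do not contain $n$, and those that do contain $n$.

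First I would handle the class of $k$-subsets $\{i_1,\dots,i_k\}\subseteq\{1,\dots,n\}$ with $n\notin\{i_1,\dots,i_k\}$. These are exactly the $k$-subsets of $\{1,\dots,n-1\}$, so summing the corresponding monomials $\prod_{l=1}^k x_{i_l}$ gives precisely $\varepsilon_k^{1,\dots,n-1}$ by definition. Second I would handle the class with $n\in\{i_1,\dots,i_k\}$: here the largest index is $i_k=n$, so the monomial factors as $\left(\prod_{l=1}^{k-1}x_{i_l}\right)x_n$ where now $\{i_1,\dots,i_{k-1}\}$ ranges over all $(k-1)$-subsets of $\{1,\dots,n-1\}$. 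Summing these monomials and factoring out $x_n=\varepsilon_1^n$ yields $\varepsilon_{k-1}^{1,\dots,n-1}\,\varepsilon_1^n$. Adding the two contributions gives (2.2).

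The only real subtlety — and the step I would be most careful about — is the treatment of the boundary cases so that the convention (2.1), together with $\varepsilon_0^{1,\dots,n-1}=1$ and $\varepsilon_{n}^{1,\dots,n-1}=0$ from Notation 1.1, makes the formula hold uniformly for every $k\in\{1,\dots,n\}$. When $k=n$, the first class is empty (there is no $n$-subset of $\{1,\dots,n-1\}$), which is consistent with $\varepsilon_n^{1,\dots,n-1}=0$; the second class contributes $\varepsilon_{n-1}^{1,\dots,n-1}\,\varepsilon_1^n$, which indeed equals $\left(\prod_{j=1}^{n-1}x_j\right)x_n=\varepsilon_n^{1,\dots,n}$. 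When $k=1$, the second class contributes $\varepsilon_0^{1,\dots,n-1}\varepsilon_1^n=x_n$ and the first contributes $\varepsilon_1^{1,\dots,n-1}=x_1+\cdots+x_{n-1}$, summing to $\varepsilon_1^{1,\dots,n}$. I would state the argument for a generic $k$ with $1\le k\le n-1$ and then remark that the endpoint conventions absorb the degenerate cases, so no separate induction is needed — the identity is a direct consequence of the partition of $k$-subsets of $\{1,\dots,n\}$ by membership of the element $n$.
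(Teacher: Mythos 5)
Your proposal is correct and follows essentially the same route as the paper: partitioning the monomials of $\varepsilon_k^{1,\dots,n}$ according to whether the index $n$ occurs (the paper formalizes this via the disjoint union $X_k^{1,\dots,n} = X_k^{1,\dots,n-1} \sqcup Y_{k-1}^{1,\dots,n-1}$ in (2.3)), with the boundary cases $k=1$ and $k=n$ handled by the conventions of Notation 1.1 and (2.1), which the paper treats as separate short cases and you absorb into the same partition argument.
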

    
    \begin{proof}
    	Assume first that $k$ $=$ $1.$ Then
    	
    	$\qquad \qquad \varepsilon _{1}^{1,...,n}$ $=$ $\sum_{j=1}^{n}$ $x_{j}$ $=$ $%
    	\left( \sum_{j=1}^{n-1}x_{j}\right) $ $+$ $x_{n}$
    	
    	\strut
    	
    	$\qquad \qquad \qquad \qquad =$ $\varepsilon _{1}^{1,...,n-1}$ $+$ $1\cdot $ 
    	$\varepsilon _{1}^{n}$
    	
    	\strut
    	
    	$\qquad \qquad \qquad \qquad =$ $\varepsilon _{1}^{1,...,n-1}$ $+$ $%
    	\varepsilon _{0}^{1,...,n-1}$ $\varepsilon _{1}^{n},$
    	
    	\strut
    	
    	by \textbf{Notation 1.1} and (2.1). So, if $k$ $=$ $1,$ then the relation
    	(2.2) holds.
    	
    	Suppose now that $k$ $=$ $n.$ Then
    	
    	$\qquad \qquad \varepsilon _{n}^{1,...,n}$ $=$ $\overset{n}{\underset{j=1}{%
    			\Pi }}x_{j}$ $=$ $0+\left( \overset{n-1}{\underset{j=1}{\Pi }}\text{ }%
    	x_{j}\right) \left( x_{n}\right) $
    	
    	\strut
    	
    	$\qquad \qquad \qquad \qquad =$ $\varepsilon _{n}^{1,...,n-1}$ $+$ $%
    	\varepsilon _{n-1}^{1,...,n-1}$ $\varepsilon _{1}^{n},$
    	
    	\strut
    	
    	by \textbf{Notation 1.1} and (2.1). Thus, if $k$ $=$ $n,$ then the relation
    	(2.2) holds.
    	
    	Now, take $k$ $\in $ $\{2,$ ..., $n-1\},$ and define a set
    	
    	\begin{center}
    		$T_{k}^{1,...,n}$ $=$ $\{(i_{1},$ ..., $i_{k})$ $\in $ $\{1,$ ..., $n\}^{k}$ 
    		$:$ $i_{1}<i_{2}<...<i_{k}\}.$
    	\end{center}
    	
    	Define also a set
    	
    	\begin{center}
    		$T_{k}^{1,...,n-1}$ $=$ $\{(j_{1},...,j_{k})$ $\in $ $\{1,$ ..., $n-1\}^{k}$ 
    		$:$ $j_{1}<...<j_{k}\},$
    	\end{center}
    	
    	and similarly,
    	
    	\begin{center}
    		$T_{k-1}^{1,...,n-1}$ $=$ $\{(l_{1},...,l_{k-1})$ $\in $ $\{1,$ ..., $%
    		n-1\}^{k-1}$ $:$ $l_{1}$ $<$ ... $<$ $l_{k-1}\}.$
    	\end{center}
    	
    	By the very construction, two sets $T_{k}^{1,...,n-1}$ and $%
    	T_{k-1}^{1,...,n-1}$ are understood as subsets of $T_{k}^{1,...,n},$ for all 
    	$k$ $=$ $2,$ $3,$ ..., $n-1.$
    	
    	Depending on the above sets, construct
    	
    	\begin{center}
    		$X_{k}^{1,...,n}$ $=$ $\left\{ \overset{k}{\underset{l=1}{\Pi }}\text{ }%
    		x_{i_{l}}\text{ }:(i_{1},...,i_{k})\in T_{k}^{1,...,n}\right\} ,$
    	\end{center}
    	
    	\strut
    	
    	\begin{center}
    		$X_{k}^{1,...,n-1}$ $=$ $\left\{ \overset{k}{\underset{l=1}{\Pi }}\text{ }%
    		x_{i_{l}}:(i_{1},...,i_{k})\in T_{k}^{1,...,n-1}\right\} ,$
    	\end{center}
    	
    	and
    	
    	\begin{center}
    		$X_{k-1}^{1,...,n-1}$ $=$ $\left\{ \overset{k-1}{\underset{l=1}{\Pi }}\text{ 
    		}x_{i_{l}}:(i_{1},...,i_{k-1})\in T_{k-1}^{1,...,n-1}\right\} ,$
    	\end{center}
    	
    	respectively.
    	
    	Now, let's define
    	
    	\begin{center}
    		$Y_{k-1}^{1,...,n-1}$ $=$ $\left\{ \left( \overset{k-1}{\underset{l=1}{\Pi }}%
    		\text{ }x_{i_{l}}\right) x_{n}:\overset{k-1}{\underset{l=1}{\Pi }}\text{ }%
    		x_{i_{l}}\in X_{k-1}^{1,...,n-1}\right\} ,$
    	\end{center}
    	
    	i.e.,
    	
    	\begin{center}
    		$Y_{k-1}^{1,...,n-1}$ $=$ $X_{k-1}^{1,...,n-1}x_{n},$
    	\end{center}
    	
    	where $Xx$ $=$ $\{yx:y\in X\}.$ Notice that if $k$ $=$ $2,$ ..., $n-1,$ then
    	
    	(2.3)
    	
    	\begin{center}
    		$X_{k}^{1,...,n}$ $=$ $X_{k}^{1,...,n-1}$ $\sqcup $ $Y_{k-1}^{1,...,n-1},$
    	\end{center}
    	
    	set-theoretically, by the very definitions, where $\sqcup $ means the
    	disjoint union.
    	
    	Observe now that, whenever $k$ $=$ $2,$ ..., $n-1,$ we have
    	
    	$\qquad \varepsilon _{k}^{1,...,n}$ $=$ $\varepsilon _{k}\left(
    	x_{1},...,x_{n}\right) $ $=$ $\underset{(x_{i_{1}},...,x_{i_{k}})\in
    		X_{k}^{1,...,n}}{\sum }$ $\left( \overset{k}{\underset{l=1}{\Pi }}%
    	x_{i_{l}}\right) $
    	
    	\strut
    	
    	$\qquad \qquad =$ $\underset{(x_{i_{1}},...,x_{i_{k}})\in
    		X_{k}^{1,...,n-1}\sqcup Y_{k-1}^{1,...,n-1}}{\sum }\left( \overset{k}{%
    		\underset{l=1}{\Pi }}\text{ }x_{i_{l}}\right) $
    	
    	\strut by (2.3)
    	
    	$\qquad \qquad =$ $\underset{(x_{i_{1}},...,x_{i_{k}})\in X_{k}^{1,...,n-1}}{%
    		\sum }\left( \overset{k}{\underset{l=1}{\Pi }}\text{ }x_{i_{l}}\right) $
    	
    	$\qquad \qquad \qquad \qquad \qquad $ $+$ $\underset{%
    		(x_{i_{1}},...,x_{i_{k-1}},x_{n})\in Y_{k-1}^{1,...,n-1}}{\sum }$ $\left(
    	\left( \overset{k-1}{\underset{l=1}{\Pi }}\text{ }x_{i_{l}}\right)
    	(x_{n})\right) $
    	
    	\strut
    	
    	$\qquad \qquad =$ $\underset{(x_{i_{1}},...,x_{i_{k}})\in X_{k}^{1,...,n-1}}{%
    		\sum }\left( \overset{k}{\underset{l=1}{\Pi }}\text{ }x_{i_{l}}\right) $
    	
    	$\qquad \qquad \qquad \qquad \qquad +$ $\left( \underset{%
    		(x_{i_{1}},...,x_{i_{k-1}},x_{n})\in X_{k-1}^{1,...,n-1}}{\sum }\left( 
    	\overset{k-1}{\underset{l=1}{\Pi }}\text{ }x_{i_{l}}\right) \right) (x_{n})$
    	
    	\strut
    	
    	$\qquad \qquad =$ $\varepsilon _{k}^{1,...,n-1}$ $+$ $\varepsilon
    	_{k-1}^{1,...,n-1}$ $\varepsilon _{1}^{n}.$
    	
    	\strut
    	
    	Therefore, the relations (2.2) hold, for all $k$ $=$ $2,$ ..., $n-1.$\strut
    	
    	So, the relations (2.2) hold, for all $k$ $=$ $1,$ $2,$ ..., $n.$
    	\end{proof}
    
    The above relation (2.2) shows the relations between the generators
    
    \begin{center}
    	$\{\varepsilon _{1}^{1,...,n},$ ..., $\varepsilon _{n}^{1,...,n}\}$ of $%
    	\mathcal{S}_{x_{1},...,x_{n}}$
    \end{center}
    
    and those
    
    \begin{center}
    	$\{\varepsilon _{1}^{1,...,n-1},$ ..., $\varepsilon _{n-1}^{1,...,n-1}\}$ of 
    	$\mathcal{S}_{x_{1},...,x_{n-1}}$ and $\{\varepsilon _{1}^{n}\}$ of $%
    	\mathcal{S}_{x_{n}}$ $=$ $\mathcal{A}_{x_{n}}$
    \end{center}
    
    \strut Also, it shows that all generators of $\mathcal{S}_{x_{1},...,x_{n}}$
    are induced by the generators of
    
    \begin{center}
    	$\mathcal{S}_{x_{1},...,x_{n-1}}$ and $\mathcal{S}_{x_{n}},$
    \end{center}
    
    by (2.1).
    
    Motivated by (2.2), one can obtain the following generalized result.
    
    \begin{theorem}
    	Let $\{\varepsilon _{k}^{1,...,n}\}_{k=1}^{n}$ be the elementary symmetric
    	polynomials generating the symmetric subalgebra $\mathcal{S}%
    	_{x_{1},...,x_{n}}$. Then, for any fixed $i_{0}$ $\in $ $\{1,$ ..., $n\},$
    	we have
    	
    	(2.4)
    	
    	\begin{center}
    		$\varepsilon _{k}^{1,...,n}$ $=$ $\varepsilon _{k}^{1,...,i_{0}-1,\text{ }%
    			i_{0}+1,...,n}$ $+$ $\varepsilon _{k-1}^{1,...,i_{0}-1,i_{0}+1,...,n}$ $%
    		\varepsilon _{1}^{i_{0}},$
    	\end{center}
    	
    	for all $k$ $=$ $1,$ $2,$ ..., $n,$ where $\varepsilon
    	_{l}^{i_{1},...,i_{m}} $ are in the sense of (2.1).
    \end{theorem}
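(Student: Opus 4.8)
The plan is to deduce (2.4) from the already-established special case (2.2) by a relabeling (symmetry) argument; I will also indicate the parallel direct combinatorial proof.

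For the relabeling argument, fix $i_{0}\in\{1,\ldots,n\}$. If $i_{0}=n$ there is nothing to prove, since then $\{1,\ldots,i_{0}-1,i_{0}+1,\ldots,n\}=\{1,\ldots,n-1\}$ and $\varepsilon_{1}^{i_{0}}=\varepsilon_{1}^{n}$, so (2.4) is literally (2.2). Assume $i_{0}<n$ and let $\sigma\in S_{n}$ be the transposition interchanging $i_{0}$ and $n$. The key point is that, by the convention (2.1), $\varepsilon_{k}^{j_{1},\ldots,j_{t}}$ depends only on the \emph{set} of variables $\{x_{j_{1}},\ldots,x_{j_{t}}\}$ and not on the listing of its indices; in particular $\varepsilon_{k}^{j_{1},\ldots,j_{t}}$ is invariant under any permutation of $(j_{1},\ldots,j_{t})$. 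Now replace $x_{1},\ldots,x_{n}$ by $x_{\sigma(1)},\ldots,x_{\sigma(n)}$ in the identity (2.2). On the left, $\varepsilon_{k}(x_{\sigma(1)},\ldots,x_{\sigma(n)})=\varepsilon_{k}(x_{1},\ldots,x_{n})=\varepsilon_{k}^{1,\ldots,n}$ because $\varepsilon_{k}^{1,\ldots,n}$ is symmetric. On the right, the first summand becomes $\varepsilon_{k}$ of the variable set $\{x_{\sigma(1)},\ldots,x_{\sigma(n-1)}\}=\{x_{1},\ldots,x_{n}\}\setminus\{x_{i_{0}}\}$, which by the invariance just noted equals $\varepsilon_{k}^{1,\ldots,i_{0}-1,i_{0}+1,\ldots,n}$; similarly the factor $\varepsilon_{k-1}^{1,\ldots,n-1}$ becomes $\varepsilon_{k-1}^{1,\ldots,i_{0}-1,i_{0}+1,\ldots,n}$ and $\varepsilon_{1}^{n}$ becomes $\varepsilon_{1}^{i_{0}}$. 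Reading off the resulting equation gives exactly (2.4) for every $k=1,\ldots,n$.

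Alternatively, one can repeat the proof of Theorem 2.2 verbatim with $n$ replaced by $i_{0}$ as the distinguished index. Treat $k=1$ and $k=n$ by direct inspection as before. For $2\le k\le n-1$, partition the index set $T_{k}^{1,\ldots,n}=\{(i_{1},\ldots,i_{k}):i_{1}<\cdots<i_{k}\}$ according to whether $i_{0}$ occurs among $i_{1},\ldots,i_{k}$: the tuples omitting $i_{0}$ are in bijection with the strictly increasing $k$-tuples from $\{1,\ldots,n\}\setminus\{i_{0}\}$, contributing $\varepsilon_{k}^{1,\ldots,i_{0}-1,i_{0}+1,\ldots,n}$, while the tuples containing $i_{0}$ are in bijection with the strictly increasing $(k-1)$-tuples from $\{1,\ldots,n\}\setminus\{i_{0}\}$ with $x_{i_{0}}$ adjoined, contributing $\varepsilon_{k-1}^{1,\ldots,i_{0}-1,i_{0}+1,\ldots,n}\,\varepsilon_{1}^{i_{0}}$. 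Summing the associated monomials yields (2.4).

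The only slightly delicate point, common to both routes, is bookkeeping with the conventions of Notation 1.1 and (2.1): one must check that $\varepsilon_{k}^{1,\ldots,i_{0}-1,i_{0}+1,\ldots,n}$ really is the $k$-th elementary symmetric polynomial in the $(n-1)$ variables $\{x_{1},\ldots,x_{n}\}\setminus\{x_{i_{0}}\}$, with the edge cases $\varepsilon_{0}=1$ (needed for $k=1$) and $\varepsilon_{n}^{1,\ldots,i_{0}-1,i_{0}+1,\ldots,n}=0$ (needed for $k=n$) handled correctly. Once that is in place, the symmetry reduction is immediate; I expect it to give the cleanest write-up, the main (minor) obstacle being to state precisely that $\varepsilon_{k}^{j_{1},\ldots,j_{t}}$ is invariant under permutations of its index list, which is what makes applying $\sigma$ to (2.2) legitimate.
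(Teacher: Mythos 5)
Your proposal is correct, and your primary route differs from the paper's. The paper disposes of (2.4) in one line by saying the proof of (2.2) goes through verbatim with the distinguished index $n$ replaced by $i_{0}$ --- i.e.\ exactly your ``alternative'' argument: split the strictly increasing $k$-tuples of $\{1,\ldots,n\}$ according to whether they contain $i_{0}$, the two blocks contributing $\varepsilon _{k}^{1,\ldots,i_{0}-1,i_{0}+1,\ldots,n}$ and $\varepsilon _{k-1}^{1,\ldots,i_{0}-1,i_{0}+1,\ldots,n}\varepsilon _{1}^{i_{0}}$, with the edge cases $k=1$ and $k=n$ handled by the conventions of Notation 1.1 and (2.1). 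Your main argument instead \emph{deduces} (2.4) from the already proven identity (2.2): apply the transposition $\sigma =(i_{0}\ n)$ to the variables, use that the polynomial identity (2.2) is stable under relabeling of the independent variables, that $\varepsilon _{k}^{1,\ldots,n}$ is symmetric (so the left-hand side is unchanged), and that $\varepsilon _{k}^{j_{1},\ldots,j_{t}}$ depends only on the underlying set of indices (so the right-hand side becomes the $i_{0}$-omitted polynomials and $\varepsilon _{1}^{i_{0}}$). Both are sound; the relabeling argument is the cleaner reduction because it reuses Theorem 2.1 and isolates the one fact that makes the generalization automatic (permutation-invariance of the index list, which you correctly flag as the point needing an explicit statement), while the paper's repeat-the-partition route is self-contained and keeps the combinatorial bookkeeping explicit without appealing to that invariance. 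Your handling of the trivial case $i_{0}=n$ and of the boundary values $\varepsilon _{0}=1$ and $\varepsilon _{n}^{1,\ldots,i_{0}-1,i_{0}+1,\ldots,n}=0$ is exactly the bookkeeping the paper leaves implicit.
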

    
    \begin{proof}
    	\strut The proof of (2.4) is similarly done by that of (2.3), by replacing $%
    	n $ to $i_{0}.$
    \end{proof}
    
    \strut So, one can realize that the generators of symmetric subalgebra $%
    \mathcal{S}_{x_{1},...,x_{n}}$ are induced by the generators of
    
    \begin{center}
    	$\mathcal{S}_{x_{1},...,x_{i_{0}-1},x_{i_{0}+1},...,n}$ and $\mathcal{S}%
    	_{x_{i_{0}}}$ $=$ $\mathbb{C}[x_{i_{0}}=\varepsilon _{1}^{i_{0}}],$
    \end{center}
    
    by (2.4).
    
    \begin{example}
    	Suppose we have the symmetric subalgebra $\mathcal{S}%
    	_{x_{1},x_{2},x_{3},x_{4}}$ whose generators are the elementary symmetric
    	polynomials $\varepsilon _{j}^{1,...,4},$ for $j$ $=$ $1,$ ..., $4.$ i.e.,
    	
    	$\qquad \qquad \varepsilon _{1}^{1,2,3,4}$ $=$ $x_{1}+x_{2}+x_{3}+x_{4},$
    	
    	$\qquad \qquad \varepsilon _{2}^{1,2,3,4}$ $=$ $%
    	x_{1}x_{2}+x_{1}x_{3}+x_{1}x_{4}+x_{2}x_{3}+x_{2}x_{4}+x_{3}x_{4},$
    	
    	$\qquad \qquad \varepsilon _{3}^{1,2,3,4}$ $=$ $%
    	x_{1}x_{2}x_{3}+x_{1}x_{3}x_{4}+x_{2}x_{3}x_{4},$
    	
    	and
    	
    	$\qquad \qquad \varepsilon _{4}^{1,2,3,4}$ $=$ $x_{1}x_{2}x_{3}x_{4}$
    	
    	in $\mathcal{S}_{x_{1},x_{2},x_{3},x_{4}}.$
    	
    	Then
    	
    	$\qquad \qquad \varepsilon _{1}^{1,2,3,4}$ $=$ $\varepsilon
    	_{1}^{1,2,3}+\varepsilon _{0}^{1,2,3}\varepsilon _{1}^{4}$ $=$ $\varepsilon
    	_{1}^{1,2,3}$ $+$ $\varepsilon _{1}^{4},$
    	
    	$\qquad \qquad \varepsilon _{2}^{1,2,3,4}$ $=$ $\varepsilon
    	_{2}^{1,2,3}+\varepsilon _{1}^{1,2,3}\varepsilon _{1}^{4},$
    	
    	$\qquad \qquad \varepsilon _{3}^{1,2,3,4}$ $=$ $\varepsilon
    	_{3}^{1,2,3}+\varepsilon _{2}^{1,2,3}\varepsilon _{1}^{4},$
    	
    	and
    	
    	$\qquad \qquad \varepsilon _{4}^{1,2,3,4}$ $=$ $\varepsilon
    	_{4}^{1,2,3}+\varepsilon _{3}^{1,2,3}\varepsilon _{1}^{4}$ $=$ $\varepsilon
    	_{3}^{1,2,3}\varepsilon _{1}^{4}.$
    	
    	\strut
    	
    	Similarly, one has
    	
    	\strut
    	
    	$\qquad \qquad \varepsilon _{1}^{1,2,3,4}$ $=$ $\varepsilon
    	_{1}^{1,2,4}+\varepsilon _{0}^{1,2,4}\varepsilon _{1}^{3}$ $=$ $\varepsilon
    	_{1}^{1,2,4}+\varepsilon _{1}^{3},$
    	
    	$\qquad \qquad \varepsilon _{2}^{1,2,3,4}$ $=$ $\varepsilon
    	_{2}^{1,2,4}+\varepsilon _{1}^{1,2,4}\varepsilon _{1}^{3},$
    	
    	$\qquad \qquad \varepsilon _{3}^{1,2,3,4}$ $=$ $\varepsilon
    	_{3}^{1,2,4}+\varepsilon _{2}^{1,2,4}\varepsilon _{1}^{3},$
    	
    	and
    	
    	$\qquad \qquad \varepsilon _{4}^{1,2,3,4}$ $=$ $\varepsilon
    	_{4}^{1,2,4}+\varepsilon _{3}^{1,2,4}\varepsilon _{1}^{3}$ $=$ $\varepsilon
    	_{3}^{1,2,4}\varepsilon _{1}^{3},$
    	
    	etc.
    \end{example}
    
    \strut \strut
    
    \textbf{Acknowledgment} Before submitting this paper, the authors realized
    that the relation (2.2) was already known in psychology; psychological tests
    (e.g, see [1]). According to [1], Dr. Fischer proved the relation (2.2) in
    his book ''Einf\"{u}hrung in die Theorie Psychologischer Tests.''
    However, we could not find the sources, including the book, containing
    Fischer's proof. So, we provided our own proofs of (2.2) and (2.4) above. $%
    \square $
    
    \strut \strut \strut \strut
    
\section{Perturbations on $\mathfrak{S}$ and Shifts on $\mathcal{S}_{x_{1},...,x_{n}}$}
    
In this section, we define the collection $\mathfrak{S}$ of all symmetric
subalgebras of finitely-many commutative variables. And construct certain
perturbation processes on $\mathfrak{S},$ by understanding each $f$ of $\mathcal{S}_{x_{1},...,x_{n}}$ $\in $ $\mathfrak{S}$ as multiplication from $\mathcal{S}%
_{y_{1},...,y_{k}}$ to $\mathcal{S}_{x_{1},...,x_{n},y_{1},...,y_{k}}$ in $%
\mathfrak{S}$ under additional axiomatization (See Section 3.1). Also, we
consider a shifting process on a fixed symmetric subalgebra $\mathcal{S}%
_{x_{1},...,x_{n}}$ $\in $ $\mathfrak{S}$ by reformulating indexes of
generators of $\mathcal{S}_{x_{1},...,x_{n}}$, in Section 3.2. In Section
3.3, we apply our perturbation of Section 3.1 and shifting process of
Section 3.2 to the inductive construction processes of $\mathfrak{S}.$
    
\subsection{Perturbation on $\mathfrak{S}$}
Let $\mathfrak{S}$ be the collection of symmetric subalgebras $\mathcal{S}%
_{x_{1},...,x_{n}}$ in commutative variables $\{x_{1},$ ..., $x_{n}\},$ for
all $n$ $\in $ $\mathbb{N}.$ Let's fix $n_{0}$ $\in $ $\mathbb{N},$ and take $%
\mathcal{S}_{x_{1},...,x_{n_{0}}}$ in $\mathfrak{S}.$

\begin{definition}
Define now \em{perturbations of }$\mathcal{S}_{x_{1},...,x_{n_{0}}}$\em{%
	\ on} $\mathfrak{S}$ by

(3.1.1)

\begin{center}
$f$ $:$ $h$ $\in $ $\mathcal{S}_{y_{1},...,y_{n}}$ $\longmapsto $ $fh$ $\in $
$\mathcal{S}_{x_{1},...,x_{n_{0}},y_{1},...,y_{n}}$ in $\mathfrak{S},$
\end{center}

for $f$ $\in $ $\mathcal{S}_{x_{1},..,x_{n_{0}}},$ for all $\mathcal{S}%
_{y_{1},...,y_{n}}$ $\in $ $\mathfrak{S},$ with identification: if

\begin{center}
$\partial $ $=$ $\{x_{1},...,x_{n_{0}}\}$ $\cap $ $\{y_{1},...,y_{n}\}$
\end{center}

is non-empty, then

$\strut $(3.1.1)$^{\prime }$

\begin{center}
$\mathcal{S}_{x_{1},...,x_{n_{0}},y_{1},...,y_{n}}$ $=$ $\mathcal{S}_{\left(
	\{x_{1},..,x_{n_{0}}\}\setminus \partial \right) \sqcup \text{ }\partial 
	\text{ }\sqcup \text{ }\left( \{y_{1},...,y_{n}\}\setminus \partial \right)
},$
\end{center}

in $\mathfrak{S}.$
\end{definition}

Since every symmetric subalgebra is generated by elementary symmetric
polynomials, the perturbations (3.1.1) of $\mathcal{S}_{x_{1},...,x_{n_{0}}}$
on $\mathfrak{S}$ is characterized on generators, i.e.,

(3.1.2)

\begin{center}
$\varepsilon _{k}^{x_{1},...,x_{n_{0}}}$ $:$ $\varepsilon
_{l}^{y_{1},...,y_{n}}$ $\in $ $\mathcal{S}_{y_{1},...,y_{n}}$ $\longmapsto $
$\varepsilon _{k}^{x_{1},...,x_{n_{0}}}\varepsilon _{l}^{y_{1},...,y_{n}}$ $%
\in $ $\mathcal{S}_{x_{1},..,x_{n_{0}},y_{1},...,y_{n}}$
\end{center}

in $\mathfrak{S},$ satisfying the identification; (3.1.1)$^{\prime }.$

Now, let $\mathcal{S}_{y}$ $=$ $\mathbb{C}[y]$ $=$ $\mathcal{A}_{y}$ in $\mathfrak{S%
},$ and assume that $y$ $\neq $ $x_{j},$ for $j$ $=$ $1,$ ..., $n_{0}.$
Consider the perturbation of $\mathcal{S}_{x_{1},...,x_{n_{0}}}$ on $\mathfrak{S}
$ acting on $\mathcal{S}_{y},$ i.e.,

\begin{center}
$\varepsilon _{k}^{x_{1},...,x_{n_{0}}}$ $:$ $\varepsilon _{1}^{y}$ $=$ $y$ $%
\longmapsto $ $\varepsilon _{k}^{x_{1},...,x_{n_{0}}}$ $\varepsilon
_{1}^{y}. $
\end{center}

\begin{proposition}
The perturbations of $\mathcal{S}_{x_{1},...,x_{n_{0}}}$ on $\mathfrak{S}$ is a
well-defined categorial functor on $\mathfrak{S}.$
\end{proposition}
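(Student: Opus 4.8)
The plan is to produce a functor $\mathcal{F}$ on $\mathfrak{S}$ and to verify, in order, that it is well defined on objects, well defined on arrows, and satisfies the two functor axioms (preservation of identities and of composition), keeping track throughout of compatibility with the identification (3.1.1)$^{\prime}$. First I fix the category. The objects of $\mathfrak{S}$ are the symmetric subalgebras $\mathcal{S}_{z_1,\ldots,z_m}$ of (1.2); an arrow is a perturbation in the sense of (3.1.1), that is, for $h$ an element of some $\mathcal{S}_{w_1,\ldots,w_\ell}\in\mathfrak{S}$, the multiplication map $h\colon g\mapsto hg$ from $\mathcal{S}_{z_1,\ldots,z_m}$ to $\mathcal{S}_{z_1,\ldots,z_m,w_1,\ldots,w_\ell}$ (subscripts read through (3.1.1)$^{\prime}$); the identity arrow of $\mathcal{S}_{z_1,\ldots,z_m}$ is multiplication by $\varepsilon_0^{z_1,\ldots,z_m}=1$ of Notation 1.1, which by (3.1.1)$^{\prime}$ is an endomorphism of $\mathcal{S}_{z_1,\ldots,z_m}$; and composition is composition of such maps. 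With $\mathcal{S}_{x_1,\ldots,x_{n_0}}$ fixed, I take as candidate functor $\mathcal{F}$ the object rule $\mathcal{F}(\mathcal{S}_{y_1,\ldots,y_n})=\mathcal{S}_{x_1,\ldots,x_{n_0},y_1,\ldots,y_n}$ (again via (3.1.1)$^{\prime}$) and the arrow rule carrying a perturbation $h\colon\mathcal{S}_{y_1,\ldots,y_n}\to\mathcal{S}_{y_1,\ldots,y_n,w_1,\ldots,w_\ell}$ to the parallel perturbation $h\colon\mathcal{S}_{x_1,\ldots,x_{n_0},y_1,\ldots,y_n}\to\mathcal{S}_{x_1,\ldots,x_{n_0},y_1,\ldots,y_n,w_1,\ldots,w_\ell}$.

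Next I would check that $\mathcal{F}$ sends objects to objects and arrows to arrows. For objects, the point is simply that the right-hand side of (3.1.1)$^{\prime}$ depends only on the underlying set $\{x_1,\ldots,x_{n_0}\}\cup\{y_1,\ldots,y_n\}$, so $\mathcal{F}(\mathcal{S}_{y_1,\ldots,y_n})$ is again one of the subalgebras (1.2), independently of how its variables happen to be listed, and the object rule is consistent with (3.1.1)$^{\prime}$ itself (which only collapses a repeated variable before forming the subalgebra); the arrow rule is unambiguous because a perturbation is determined by the multiplying element $h$. For arrows, one must see that multiplication by $h\in\mathcal{S}_{w_1,\ldots,w_\ell}$ carries $\mathcal{F}(\mathcal{S}_{y_1,\ldots,y_n})$ into $\mathcal{F}(\mathcal{S}_{y_1,\ldots,y_n,w_1,\ldots,w_\ell})$; by the Fundamental Theorem of Symmetric Functions it suffices to test this on algebra generators, where it is exactly (3.1.2), and the recurrence (2.2) of Theorem 2.1 --- together with its generalization (2.4) --- applied successively over the extra variables $w_1,\ldots,w_\ell$, is precisely the device that re-expands each product $\varepsilon_k^{x_1,\ldots,x_{n_0},y_1,\ldots,y_n}\,\varepsilon_l^{w_1,\ldots,w_\ell}$ inside $\mathcal{F}(\mathcal{S}_{y_1,\ldots,y_n,w_1,\ldots,w_\ell})$. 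I expect this arrow-well-definedness to be the main obstacle: it is the one place where the structure of the elementary symmetric polynomials, hence Theorems 2.1 and 2.2, is genuinely used, and also where (3.1.1)$^{\prime}$ has to be tracked most carefully when the variable groups overlap; the remaining steps are formal.

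Finally, the functor axioms. The identity arrow of $\mathcal{S}_{y_1,\ldots,y_n}$ is multiplication by $\varepsilon_0^{y_1,\ldots,y_n}=1$, and $\mathcal{F}$ sends it to multiplication by $1$ on $\mathcal{F}(\mathcal{S}_{y_1,\ldots,y_n})$, which is the identity arrow there, so identities are preserved. For composition, if $h_1$ and then $h_2$ are perturbations whose composite is the perturbation $h_2 h_1$, then preservation of composition amounts to the identity $g(h_2 h_1)=(g h_2) h_1$ in the polynomial algebras of (1.1) together with commutativity of polynomial multiplication; this says exactly that applying $\mathcal{F}$ first to $h_1$ and then to $h_2$ composes to $\mathcal{F}$ applied to $h_2 h_1$, the subscripts matching because (3.1.1)$^{\prime}$ builds the same union of variable sets either way. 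Combining the object rule, the arrow rule, and these two checks then yields that $\mathcal{F}\colon\mathfrak{S}\to\mathfrak{S}$ is a well-defined categorial functor, which is the assertion.
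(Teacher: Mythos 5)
There is a genuine gap, and it sits exactly where you predicted the ``main obstacle'' would be. You claim that the recurrences (2.2) and (2.4) show that each product $\varepsilon_k^{x_1,\ldots,x_{n_0},y_1,\ldots,y_n}\,\varepsilon_l^{w_1,\ldots,w_\ell}$ lies in $\mathcal{F}\bigl(\mathcal{S}_{y_1,\ldots,y_n,w_1,\ldots,w_\ell}\bigr)=\mathcal{S}_{x_1,\ldots,x_{n_0},y_1,\ldots,y_n,w_1,\ldots,w_\ell}$, but those relations run in the opposite direction: they write an elementary symmetric polynomial of the \emph{enlarged} variable set as a sum of block products, and they do not express a single block product in terms of the $\varepsilon_m$ of the union. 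Indeed no such expression exists in general, because such a product is not invariant under permutations that mix the two blocks: already $\varepsilon_1^{x_1,x_2}\,\varepsilon_1^{w}=(x_1+x_2)w$ is not fixed by the transposition $x_1\leftrightarrow w$, hence does not belong to the symmetric subalgebra of $\{x_1,x_2,w\}$ in the sense of (1.2). So the containment you need for arrow-well-definedness is not delivered by (2.2)/(2.4), and as stated it is false; making it true requires reinterpreting the codomain (for instance as the subalgebra of $\mathcal{A}_{x_1,\ldots,x_{n_0},y_1,\ldots,y_n,w_1,\ldots,w_\ell}$ generated by the two symmetric subalgebras, or working in the ambient polynomial algebra), which is a change of the statement rather than a proof of it.

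The same defect undermines your composition check: the composite of perturbation by $h_1\in\mathcal{S}_{w_1,\ldots,w_\ell}$ followed by perturbation by $h_2\in\mathcal{S}_{v_1,\ldots,v_m}$ is multiplication by $h_2h_1$, which in general is not an element of any object of $\mathfrak{S}$, so the class of arrows you set up is not closed under composition without the same repair. For comparison, the paper does not attempt any of this: its proof is a one-line appeal to (3.1.1), (3.1.1)$^{\prime}$ and (3.1.2), i.e.\ it treats the containment asserted in (3.1.1) as definitional and leaves the categorical bookkeeping implicit. Your overall plan --- fix the category, check object and arrow well-definedness, then identities and composition --- is the right shape and considerably more explicit than the paper's argument, but the one step carrying real mathematical content is justified by an appeal to (2.2) and (2.4) that they do not support.
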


\begin{proof}
The proof is from the very definition (3.1.1), with identification (3.1.1)$%
^{\prime },$ with help of (3.1.2).\strut
\end{proof}
    
\subsection{\strut Shifts on $\mathcal{S}_{x_{1},...,x_{n}}$}
Let $\mathcal{S}_{x_{1},...,x_{n}}$ $\in $ $\mathfrak{S}$ be a symmetric
subalgebra. Define a shift $U$ on $\mathcal{S}_{x_{1},...,x_{n}}$ by a
linear multiplicative transformation on $\mathcal{S}_{x_{1},...,x_{n}}$
satisfying

(3.2.1)

\begin{center}
$U$ $:$ $\varepsilon _{k}^{1,...,n}$ $\longmapsto $ $\varepsilon
_{k-1}^{1,...,n}$ on $\mathcal{S}_{x_{1},...,x_{n}},$
\end{center}

for all $k$ $=$ $1,$ ..., $n,$ with additional axiomatization;

(3.2.1)$^{\prime }$

\begin{center}
\strut $U\left( \varepsilon _{0}^{1,...,n}\right) $ $=$ $U\left( 1\right) $ $%
=$ $0,$
\end{center}

making all constant functions of $\mathbb{C}$ in $\mathcal{S}_{x_{1},...,x_{n}}$
be zero, i.e., $U\left( C\right) $ $=$ $0,$ for all $C$ $\in $ $\mathbb{C}$ in $%
\mathcal{S}_{x_{1},...,x_{n}}.$\strut

\begin{definition}
The morphism $U$ on $\mathcal{S}_{x_{1},...,x_{n}}$ of (3.2.1) is called the
shift on $\mathcal{S}_{x_{1},...,x_{n}}.$
\end{definition}

More generally, the shift $U$ on $\mathcal{S}_{x_{1},...,x_{n}}$ satisfies

$\qquad \qquad U\left( t_{0}+\sum_{t=0}^{k}\underset{(i_{1},...,i_{k})\in
	\{1,...,n\}^{k}}{\sum }t_{i_{1},..,i_{k}}\text{ }\overset{k}{\underset{j=1}{%
		\Pi }}\varepsilon _{i_{j}}^{1,...,n}\right) $

(3.2.2)

$\qquad \qquad \qquad \qquad =$ $\sum_{t=0}^{k}\underset{(i_{1},...,i_{k})%
	\in \{1,...,n\}^{k}}{\sum }t_{i_{1},..,i_{k}}$ $\overset{k}{\underset{j=1}{%
		\Pi }}\varepsilon _{i_{j}-1}^{1,...,n},$

under (3.2.1)$^{\prime },$ where $t_{i_{1},...,i_{k}}$ $\in $ $\mathbb{C}.$

\begin{proposition}
The shift $U$ of (3.2.1), satisfying (3.2.2), is a well-defined
algebra-homomorphism on $\mathcal{S}_{x_{1},...,x_{n}}$, for all $\mathcal{S}%
_{x_{1},...,x_{n}}$ $\in $ $\mathfrak{S}.$
\end{proposition}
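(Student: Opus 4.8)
The plan is to deduce everything from the Fundamental Theorem of Symmetric Functions together with the universal property of polynomial algebras. By (1.7), $\mathcal{S}_{x_{1},\dots,x_{n}}$ is algebra-isomorphic to the polynomial algebra $\mathbb{C}\big[\{\varepsilon_{1}^{1,\dots,n},\dots,\varepsilon_{n}^{1,\dots,n}\}\big]$; that is, the generators $\varepsilon_{1}^{1,\dots,n},\dots,\varepsilon_{n}^{1,\dots,n}$ are algebraically independent over $\mathbb{C}$. Consequently the monomials $\prod_{j}\varepsilon_{i_{j}}^{1,\dots,n}$, taken over weakly increasing index tuples and including the empty monomial $\varepsilon_{0}^{1,\dots,n}=1$, form a $\mathbb{C}$-basis of $\mathcal{S}_{x_{1},\dots,x_{n}}$, so each $f\in\mathcal{S}_{x_{1},\dots,x_{n}}$ has a \emph{unique} expansion of the shape appearing on the left-hand side of (3.2.2). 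This is the crux of the ``well-defined'' assertion: the right-hand side of (3.2.2), read with the conventions of Notation~1.1 and (3.2.1)$'$, then attaches to $f$ a single element of $\mathcal{S}_{x_{1},\dots,x_{n}}$, and there are no hidden relations among the $\varepsilon_{k}^{1,\dots,n}$ that could render this assignment ambiguous.

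Next I would check that $U$ is an algebra-homomorphism. Linearity is immediate from the formula (3.2.2). For multiplicativity it suffices, by bilinearity, to verify $U(\mathbf{m}\,\mathbf{m}')=U(\mathbf{m})\,U(\mathbf{m}')$ for basis monomials $\mathbf{m}=\prod_{j}\varepsilon_{i_{j}}^{1,\dots,n}$ and $\mathbf{m}'=\prod_{j}\varepsilon_{i_{j}'}^{1,\dots,n}$; but $\mathbf{m}\,\mathbf{m}'$ is again a basis monomial (concatenate the index tuples and reorder), and the index shift $i\mapsto i-1$ that defines $U$ respects concatenation, so the identity holds term by term. Equivalently, and more conceptually: by the universal property of the polynomial algebra $\mathbb{C}\big[\{\varepsilon_{1}^{1,\dots,n},\dots,\varepsilon_{n}^{1,\dots,n}\}\big]$, the prescription $\varepsilon_{k}^{1,\dots,n}\mapsto\varepsilon_{k-1}^{1,\dots,n}$ for $k=1,\dots,n$ (each target element again lying in $\mathcal{S}_{x_{1},\dots,x_{n}}$, being the constant $1$ when $k=1$) extends uniquely to a $\mathbb{C}$-algebra endomorphism of $\mathcal{S}_{x_{1},\dots,x_{n}}$, and that endomorphism is forced to act on a general element exactly as displayed in (3.2.2). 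Since (1.7) is available for every $n>1$, this produces such a $U$ on every $\mathcal{S}_{x_{1},\dots,x_{n}}\in\mathfrak{S}$, as claimed.

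The one place I expect to require genuine care, rather than any deep algebra, is the treatment of the scalar part dictated by (3.2.1)$'$: on the span of the non-constant basis monomials the two descriptions (3.2.1) and (3.2.2) of $U$ plainly agree, while on $\mathbb{C}\cdot 1$ the axiom (3.2.1)$'$ fixes the value to be $0$, so one should say explicitly in which sense ``algebra-homomorphism'' is meant here, namely as the map determined on generators by (3.2.1) together with annihilation of the degree-$0$ part, and then confirm that the two prescriptions are compatible throughout $\mathcal{S}_{x_{1},\dots,x_{n}}$. Modulo that bookkeeping, the entire structural content of the proposition is simply that the elementary symmetric polynomials are a free generating set, so that a homomorphism out of $\mathcal{S}_{x_{1},\dots,x_{n}}$ may be prescribed freely and consistently on them.
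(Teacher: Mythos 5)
Your well-definedness argument is sound and is actually more careful than the paper's own proof: you invoke (1.7) to get algebraic independence of $\varepsilon_{1}^{1,\dots,n},\dots,\varepsilon_{n}^{1,\dots,n}$, so the monomials in the generators form a $\mathbb{C}$-basis and (3.2.2) assigns an unambiguous value, whereas the paper simply declares $U$ linear and multiplicative ``by the very construction'' and checks multiplicativity only on monomials $(\varepsilon_{k_{1}}^{1,\dots,n})^{m_{1}}\cdots(\varepsilon_{k_{l}}^{1,\dots,n})^{m_{l}}$. However, the point you defer as ``bookkeeping'' is precisely where the argument breaks, and it cannot be repaired. Any multiplicative map with $U(1)=0$ satisfies $U(f)=U(1\cdot f)=U(1)\,U(f)=0$ for every $f$, contradicting $U(\varepsilon_{1}^{1,\dots,n})=\varepsilon_{0}^{1,\dots,n}=1$. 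Concretely, the map determined by (3.2.2) together with (3.2.1)$'$ fails multiplicativity as soon as a nonzero constant term appears: for $n>1$ take $f_{1}=1+\varepsilon_{1}^{1,\dots,n}$ and $f_{2}=\varepsilon_{2}^{1,\dots,n}$; then $U(f_{1}f_{2})=U(\varepsilon_{2}^{1,\dots,n}+\varepsilon_{1}^{1,\dots,n}\varepsilon_{2}^{1,\dots,n})=\varepsilon_{1}^{1,\dots,n}+\varepsilon_{0}^{1,\dots,n}\varepsilon_{1}^{1,\dots,n}=2\varepsilon_{1}^{1,\dots,n}$, while $U(f_{1})U(f_{2})=(0+1)\varepsilon_{1}^{1,\dots,n}=\varepsilon_{1}^{1,\dots,n}$.

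This also shows that your universal-property route proves multiplicativity of the wrong map: the unique $\mathbb{C}$-algebra endomorphism extending $\varepsilon_{k}^{1,\dots,n}\mapsto\varepsilon_{k-1}^{1,\dots,n}$ is unital and fixes constant terms, so your assertion that it ``is forced to act on a general element exactly as displayed in (3.2.2)'' is false on any element with nonzero constant part, and the compatibility check you postpone cannot be completed. What survives of your argument is that $U$ is a well-defined linear map, and that it is multiplicative on the span of the non-constant basis monomials (your concatenation argument is valid there, since a product of elements without constant term again has no constant term); it is not an algebra-homomorphism on all of $\mathcal{S}_{x_{1},\dots,x_{n}}$ in the usual sense. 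To be fair, the paper's proof makes exactly the same leap---it never reconciles $U(1)=0$ with multiplicativity---so the defect is inherited from the statement itself; but your write-up presents the reconciliation as routine, and it is in fact impossible without weakening the claim (e.g., restricting multiplicativity to the augmentation ideal or dropping (3.2.1)$'$).
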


\begin{proof}
Let $U$ be the shift (3.1.1) on $\mathcal{S}_{x_{1},...,x_{n}}$ $\in $ $%
\mathfrak{S}$ satisfying (3.1.1)$^{\prime }.$ By the very construction, the
morphism $U$ is a linear transformation which is multiplicative. So, it is
automatically an algebra-homomorphism.

Clearly, by the linearity, one has

\begin{center}
$U\left( t_{1}f_{1}+t_{2}f_{2}\right) $ $=$ $t_{1}U\left( f_{1}\right)
+t_{2}U\left( f_{2}\right) ,$
\end{center}

for all $t_{1},$ $t_{2}$ $\in $ $\mathbb{C}$ and $f_{1},$ $f_{2}$ $\in $ $%
\mathcal{S}_{x_{1},...,x_{n}}.$

Also, by the multiplicativity of $U,$

$\qquad U\left( \left( \varepsilon _{k_{1}}^{1,...,n}\right)
^{m_{1}}...\left( \varepsilon _{k_{l}}^{1,...,n}\right) ^{m_{l}}\right) $ $=$
$U\left( \left( \varepsilon _{k_{1}}^{1,...,n}\right) ^{m_{1}}\right)
...U\left( \left( \varepsilon _{k_{l}}^{1,...,n}\right) ^{m_{l}}\right) $

\strut

$\qquad \qquad \qquad =$ $\left( U\left( \varepsilon
_{k_{1}}^{1,...,n}\right) \right) ^{m_{1}}...\left( U\left( \varepsilon
_{k_{l}}^{1,...,n}\right) \right) ^{m_{l}}$

\strut

$\qquad \qquad \qquad =$ $\left( \varepsilon _{k_{1}-1}^{1,...,n}\right)
^{m_{1}}...\left( \varepsilon _{k_{l}}^{1,...,n}\right) ^{m_{l}},$

by (3.2.2), for all $k_{1},$ ..., $k_{l}$ $=$ $0,$ $1,$ ..., $n$ (with
(3.2.1)$^{\prime }$), for all $m_{1},$ ..., $m_{l}$ $\in $ $\mathbb{N},$ for
all $l$ $\in $ $\mathbb{N}.$

So, for any $f_{1},$ $f_{2}$ $\in $ $\mathcal{S}_{x_{1},...,x_{n}},$ we have

\begin{center}
$U\left( f_{1}f_{2}\right) $ $=$ $U\left( f_{1}\right) $ $U\left(
f_{2}\right) $ in $\mathcal{S}_{x_{1},...,x_{n}}.$
\end{center}

i.e., the morphism $U$ is indeed an algebra-homomorphism on $\mathcal{S}%
_{x_{1},...,x_{n}}.$
\end{proof}

\subsection{\strut From $\mathcal{S}_{x_{1},...,x_{n}}$ to $\mathcal{S}_{x_{1},...,x_{n},y}$ in $\mathfrak{S}$}
Now, let's fix $n_{0}$ $\in $ $\mathbb{N},$ and the symmetric subalgebra $%
\mathcal{S}_{x_{1},...,x_{n_{0}}}$ in $\mathfrak{S}.$ Also, fix a symmetric
subalgebra $\mathcal{S}_{y}$ $=$ $\mathbb{C}[y]$ $=$ $\mathcal{A}_{y}$ in $%
\mathfrak{S},$ where $y$ $\neq $ $x_{j},$ for all $j$ $=$ $1,$ ..., $n_{0}.$ Let

\begin{center}
	$\mathcal{E}_{x_{1},...,x_{n_{0}}}$ $=$ $\{\varepsilon
	_{k}^{x_{1},...,x_{n_{0}}}\}_{k=1}^{n_{0}}$
\end{center}

be the generator set of $\mathcal{S}_{x_{1},...,x_{n_{0}}}$. If we
understand generators of $\mathcal{E}_{x_{1},...,x_{n_{0}}}$ as the
perturbations (3.1.1) on $\mathfrak{S},$ then they act

(3.3.1)

\begin{center}
	$\varepsilon _{k}^{x_{1},...,x_{n_{0}}}\left( \varepsilon _{1}^{y}\right) $ $%
	=$ $\varepsilon _{k}^{x_{1},...,x_{n_{0}}}\varepsilon _{1}^{y}$ $\in $ $%
	\mathcal{S}_{x_{1},...,x_{n_{0}},y}$
\end{center}

on $\mathcal{S}_{y}$, for all $k$ $=$ $0,$ $1,$ ..., $n_{0},$ with identity: 
$\varepsilon _{0}^{x_{1},...,x_{n_{0}}}$ $=$ $1.$

On the perturbations $\mathcal{E}_{x_{1},...,x_{n_{0}}}$ of (3.3.1), consider

(3.3.2)

\begin{center}
	$U\left( \varepsilon _{k}^{x_{1},...,x_{n_{0}}}\right) $ $=$ $\varepsilon
	_{k-1}^{x_{1},...,x_{n_{0}}},$ for all $k$ $=$ $1,$ ..., $n_{0},$
\end{center}

where $U$ is the shift on $\mathcal{S}_{x_{1},...,x_{n_{0}}}$ of (3.2.1)
satisfying (3.2.1)$^{\prime }.$

Define now a morphism

\begin{center}
	$\alpha $ $:$ $\mathcal{E}_{x_{1},...,x_{n_{0}}}$ $\rightarrow $ $\mathcal{S}%
	_{x_{1},...,x_{n_{0}},y},$
\end{center}

by a function satisfying

(3.3.3)

\begin{center}
	$\alpha \left( \varepsilon _{k}^{x_{1},...,x_{n_{0}}}\right) $ $=$ $%
	\varepsilon _{k}^{x_{1},...,x_{n_{0}}}+U\left( \varepsilon
	_{k}^{x_{1},...,x_{n_{0}}}\right) \varepsilon _{1}^{y}.$
\end{center}

\begin{theorem}
	The function $\alpha $ of (3.3.3) is a well-defined injective function from $%
	\mathcal{E}_{x_{1},...,x_{n_{0}}}$ into $\mathcal{S}%
	_{x_{1},...,x_{n_{0}},y}. $ Furthermore, this function $\alpha $ of (3.3.3)
	is injective from the generator set $\mathcal{E}_{x_{1},...,x_{n_{0}}}$ of $%
	\mathcal{S}_{x_{1},...,x_{n_{0}}}$ into the generator set $\mathcal{E}%
	_{x_{1},...,x_{n_{0}},y}$ of $\mathcal{S}_{x_{1},...,x_{n_{0}},y}.$ In
	particular, one has
	
	(3.3.4)
	
	\begin{center}
		$\mathcal{E}_{x_{1},...,x_{n_{0}},y}$ $=$ $\alpha \left( \mathcal{E}%
		_{x_{1},...,x_{n_{0}}}\right) $ $\sqcup $ $\{\varepsilon
		_{n_{0}}^{x_{1},...,x_{n_{0}}}\varepsilon _{1}^{y}\}$.
	\end{center}
\end{theorem}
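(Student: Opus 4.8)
The plan is to unwind the definition of $\alpha$ in (3.3.3) using the shift identity (3.3.2), and then recognize each resulting expression as precisely an elementary symmetric polynomial in the enlarged variable set $\{x_1,\dots,x_{n_0},y\}$, via the recurrence (2.2) from Theorem 2.1 (with $i_0$ playing the role of the new variable $y$, equivalently Theorem 2.2). Concretely, for each $k = 1,\dots,n_0$, substituting $U(\varepsilon_k^{x_1,\dots,x_{n_0}}) = \varepsilon_{k-1}^{x_1,\dots,x_{n_0}}$ into (3.3.3) gives
\[
\alpha\!\left(\varepsilon_k^{x_1,\dots,x_{n_0}}\right) = \varepsilon_k^{x_1,\dots,x_{n_0}} + \varepsilon_{k-1}^{x_1,\dots,x_{n_0}}\,\varepsilon_1^{y},
\]
which, by (2.2) applied with the extra variable $y$ adjoined (so $\{x_1,\dots,x_{n_0}\}$ is the "deleted-one-variable" set and $y$ is the last variable), is exactly $\varepsilon_k^{x_1,\dots,x_{n_0},y}$. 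This identifies $\alpha$ on generators as the inclusion $\varepsilon_k^{x_1,\dots,x_{n_0}} \mapsto \varepsilon_k^{x_1,\dots,x_{n_0},y}$ for $k=1,\dots,n_0$, which immediately gives that $\alpha$ maps $\mathcal{E}_{x_1,\dots,x_{n_0}}$ into $\mathcal{E}_{x_1,\dots,x_{n_0},y}$.

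For injectivity: the elements $\varepsilon_1^{x_1,\dots,x_{n_0},y}, \dots, \varepsilon_{n_0}^{x_1,\dots,x_{n_0},y}$ are distinct generators of $\mathcal{S}_{x_1,\dots,x_{n_0},y}$ — they have pairwise distinct degrees $1,2,\dots,n_0$ as polynomials — so the map sending the $n_0$ distinct inputs $\varepsilon_k^{x_1,\dots,x_{n_0}}$ to these $n_0$ distinct outputs is injective. (One should note in passing that the $\varepsilon_k^{x_1,\dots,x_{n_0}}$ themselves are genuinely distinct elements of the domain set, again by distinctness of degrees, so "injective as a function on the $n_0$-element set $\mathcal{E}_{x_1,\dots,x_{n_0}}$" is meaningful.) Then (3.3.4) follows by bookkeeping: the full generator set $\mathcal{E}_{x_1,\dots,x_{n_0},y}$ of $\mathcal{S}_{x_1,\dots,x_{n_0},y}$ consists of $\varepsilon_1^{x_1,\dots,x_{n_0},y},\dots,\varepsilon_{n_0+1}^{x_1,\dots,x_{n_0},y}$, of which the first $n_0$ are exactly $\alpha(\mathcal{E}_{x_1,\dots,x_{n_0}})$, and the last one is $\varepsilon_{n_0+1}^{x_1,\dots,x_{n_0},y} = \varepsilon_{n_0+1}^{x_1,\dots,x_{n_0}} + \varepsilon_{n_0}^{x_1,\dots,x_{n_0}}\varepsilon_1^{y} = \varepsilon_{n_0}^{x_1,\dots,x_{n_0}}\varepsilon_1^{y}$ (using $\varepsilon_{n_0+1}^{x_1,\dots,x_{n_0}} = 0$ from Notation 1.1), which is the extra singleton on the right of (3.3.4); disjointness of the union is clear since the singleton element has degree $n_0+1$ while every element of $\alpha(\mathcal{E}_{x_1,\dots,x_{n_0}})$ has degree $\le n_0$.

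The only genuinely delicate point is the bookkeeping at the top index $k = n_0$: one must check that (3.3.3) at $k=n_0$ produces $\varepsilon_{n_0}^{x_1,\dots,x_{n_0},y}$ and not the "missing" top generator $\varepsilon_{n_0+1}^{x_1,\dots,x_{n_0},y}$, and separately account for why $\varepsilon_{n_0+1}^{x_1,\dots,x_{n_0},y}$ is not in the image of $\alpha$ — this is exactly what forces the extra summand $\{\varepsilon_{n_0}^{x_1,\dots,x_{n_0}}\varepsilon_1^{y}\}$ in (3.3.4). Everything else is a direct application of Theorem 2.1 (or 2.2) together with the degree argument for injectivity; no hard estimates or deep structure theory are needed beyond the Fundamental Theorem of Symmetric Functions already invoked to know what $\mathcal{E}_{x_1,\dots,x_{n_0},y}$ is.
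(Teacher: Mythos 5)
Your proposal is correct and follows essentially the same route as the paper: it uses the recurrence (2.2) to identify $\alpha\left(\varepsilon_{k}^{x_{1},...,x_{n_{0}}}\right)$ with $\varepsilon_{k}^{x_{1},...,x_{n_{0}},y}$ for $k=1,\dots,n_{0}$, deduces injectivity from the distinctness of these polynomials, and obtains (3.3.4) from the vanishing of $\varepsilon_{n_{0}+1}^{x_{1},...,x_{n_{0}}}$ so that the top generator $\varepsilon_{n_{0}+1}^{x_{1},...,x_{n_{0}},y}=\varepsilon_{n_{0}}^{x_{1},...,x_{n_{0}}}\varepsilon_{1}^{y}$ is the leftover singleton. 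Your explicit degree argument for injectivity and disjointness is a slightly more detailed version of the paper's appeal to (1.6), but the structure of the argument is the same.
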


\begin{proof}
	\strut By the very definition (3.3.3), the function $\alpha $ has its domain 
	$\mathcal{E}_{x_{1},...,x_{n_{0}}},$ whose range is contained in $\mathcal{S}%
	_{x_{1},...,x_{n_{0}},y}.$ It is not difficult to check that $\alpha $ is
	injective. Indeed, whenever $k_{1}$ $\neq $ $k_{2}$ in $\{1,$ ..., $n_{0}\},$
	
	\begin{center}
		$
		\begin{array}{ll}
		\alpha \left( \varepsilon _{k_{1}}^{x_{1},...,x_{n_{0}}}\right) & 
		=\varepsilon _{k_{1}}^{x_{1},...,x_{n_{0}}}+\varepsilon
		_{k_{1}-1}^{x_{1},...,x_{n_{0}}}\varepsilon _{1}^{y} \\ 
		&  \\ 
		& \neq \varepsilon _{k_{2}}^{x_{1},...,x_{n_{0}}}+\varepsilon
		_{k_{2}}^{x_{1},...,x_{n_{0}}}\varepsilon _{1}^{y}=\alpha \left( \varepsilon
		_{k_{2}}^{x_{1},...,x_{n_{0}}}\right) ,
		\end{array}
		$
	\end{center}
	
	in $\mathcal{S}_{x_{1},...,x_{n_{0}},y},$ by (1.6).
	
	Again by (3.3.3), the range $\alpha \left( \mathcal{E}_{x_{1},...,x_{n_{0}}}%
	\right) $ of this map $\alpha $ is contained in the generator set $\mathcal{E%
	}_{x_{1},...,x_{n_{0}},y}$ of $\mathcal{S}_{x_{1},...,x_{n_{0}},y}.$ Indeed,
	the symmetric subalgebra $\mathcal{S}_{x_{1},...,x_{n_{0}},y}$ is generated
	by the elementary symmetric polynomials,
	
	\begin{center}
		$\varepsilon _{l}^{x_{1},...,x_{n_{0}},y}$ $=$ $\varepsilon _{l}\left( x_{1},%
		\text{ ..., }x_{n_{0}},\text{ }y\right) ,$
	\end{center}
	
	for $l$ $=$ $1,$ $2,$ ..., $n_{0}+1,$ satisfying
	
	\begin{center}
		$
		\begin{array}{ll}
		\varepsilon _{l}^{x_{1},...,x_{n_{0}},y} & =\varepsilon
		_{l}^{x_{1},...,x_{n_{0}}}+\varepsilon
		_{l-1}^{x_{1},...,x_{n_{0}}}\varepsilon _{1}^{y} \\ 
		& =\varepsilon _{l}^{x_{1},...,x_{n_{0}}}+U\left( \varepsilon
		_{l}^{x_{1},...,x_{n_{0}}}\right) \varepsilon _{1}^{y}=\alpha \left(
		\varepsilon _{l}^{x_{1},...,x_{n_{0}}}\right) ,
		\end{array}
		$
	\end{center}
	
	by (2.2), for all $l$ $=$ $1,$ $2,$ ..., $n_{0}.$ Therefore,
	
	\begin{center}
		$\mathcal{E}_{x_{1},...,x_{n_{0}},y}$ $=$ $\alpha \left( \mathcal{E}%
		_{x_{1},...,x_{n_{0}}}\right) $ $\sqcup $ $\{\varepsilon
		_{n_{0}+1}^{x_{1},...,x_{n_{0}},y}$ $=$ $\varepsilon
		_{n_{0}}^{x_{1},...,x_{n_{0}}}\varepsilon _{1}^{y}\}.$
	\end{center}
	
	\strut
\end{proof}

\strut The above theorem with the relation (3.3.4) illustrates the embedding
property of the generator set $\mathcal{E}_{x_{1},...,x_{n_{0}}}$ of $%
\mathcal{S}_{x_{1},...,x_{n_{0}}}$ into the generator set $\mathcal{E}%
_{x_{1},...,x_{n_{0}},y}$ of $\mathcal{S}_{x_{1},...,x_{n_{0}},y}.$

Define now a linear multiplicative morphism

\begin{center}
	$\Phi $ $:$ $\mathcal{S}_{x_{1},...,x_{n_{0}}}$ $\rightarrow $ $\mathcal{S}%
	_{x_{1},...,x_{n_{0}},y}$
\end{center}

by

$\qquad \qquad \Phi \left( t_{0}+\sum_{j=1}^{k}\underset{(i_{1},...,i_{k})%
	\in \{1,...,n_{0}\}^{k}}{\sum }t_{i_{1},...,i_{k}}\text{ }\overset{k}{%
	\underset{l=1}{\Pi }}\varepsilon _{i_{l}}^{x_{1},...,x_{n_{0}}}\right) $

\strut (3.3.5)

$\qquad \qquad \qquad \qquad =$ $t_{0}+\sum_{j=1}^{k}\underset{%
	(i_{1},...,i_{k})\in \{1,...,n_{0}\}^{k}}{\sum }t_{i_{1},...,i_{k}}$ $%
\overset{k}{\underset{l=1}{\Pi }}\alpha \left( \varepsilon
_{i_{l}}^{x_{1},...,x_{n_{0}}}\right) ,$

\strut \strut

for all $k$ $\in $ $\mathbb{N},$ where $t_{0},$ $t_{i_{1},...,i_{k}}$ $\in $ $%
\mathbb{C}.$

The above linear multiplicative morphism $\Phi $ of (3.3.5) is well-defined
because the function $\alpha $ of (3.3.3) is well-defined, and it preserves
the generators $\mathcal{E}_{x_{1},...,x_{n_{0}}}$ injectively, by (3.3.4),
into $\mathcal{E}_{x_{1},...,x_{n_{0}},y}$ of $\mathcal{S}%
_{x_{1},...,x_{n_{0}},y}.$

\begin{corollary}
	The symmetric subalgebra $\mathcal{S}_{x_{1},...,x_{n_{0}}}$ is
	algebra-monomorphic to the symmetric subalgebra $\mathcal{S}%
	_{x_{1},...,x_{n_{0}},y},$ for fixed variables $x_{1},$ ..., $x_{n_{0}},$
	i.e.,
	
	(3.3.6)
	
	\begin{center}
		$\mathcal{S}_{x_{1},...,x_{n_{0}}}$ $\overset{\text{Alg}}{\hookrightarrow }$ 
		$\mathcal{S}_{x_{1},...,x_{n_{0}},y},$
	\end{center}
	
	where ``$\overset{\text{Alg}}{\hookrightarrow }$'' means ``being embedded
	in.''
\end{corollary}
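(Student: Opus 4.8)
The plan is to exhibit the map $\Phi$ of (3.3.5) as the desired algebra-monomorphism and check the two things it needs: that it is a well-defined algebra-homomorphism, and that it is injective. By the Fundamental Theorem of Symmetric Functions (Proposition in the introduction), $\mathcal{S}_{x_{1},...,x_{n_{0}}}$ is a polynomial algebra in the algebraically independent generators $\varepsilon_{1}^{x_{1},...,x_{n_{0}}},\dots,\varepsilon_{n_{0}}^{x_{1},...,x_{n_{0}}}$, so to define an algebra-homomorphism out of it I only need to specify images of these generators and invoke the universal property of polynomial rings; the images are $\alpha(\varepsilon_{k}^{x_{1},...,x_{n_{0}}})$ as in (3.3.3), which live in $\mathcal{S}_{x_{1},...,x_{n_{0}},y}$, and this is exactly the content of $\Phi$ in (3.3.5). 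Well-definedness is then immediate from the universal property, since no relations need be respected.

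For injectivity I would argue as follows. The target $\mathcal{S}_{x_{1},...,x_{n_{0}},y}$ is itself a polynomial algebra on the algebraically independent generators $\varepsilon_{1}^{x_{1},...,x_{n_{0}},y},\dots,\varepsilon_{n_{0}+1}^{x_{1},...,x_{n_{0}},y}$, again by the Fundamental Theorem. By (2.2) (the relation proved in the first theorem of Section 2, with $y$ playing the role of $x_{n}$), we have $\varepsilon_{k}^{x_{1},...,x_{n_{0}},y} = \varepsilon_{k}^{x_{1},...,x_{n_{0}}} + \varepsilon_{k-1}^{x_{1},...,x_{n_{0}}}\varepsilon_{1}^{y} = \alpha(\varepsilon_{k}^{x_{1},...,x_{n_{0}}})$ for $k=1,\dots,n_{0}$; this is precisely the identity already recorded in the proof of the preceding theorem and encoded in (3.3.4). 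Hence $\Phi$ sends the free generators of the source bijectively onto $n_{0}$ of the $n_{0}+1$ free generators of the target, namely onto $\varepsilon_{1}^{x_{1},...,x_{n_{0}},y},\dots,\varepsilon_{n_{0}}^{x_{1},...,x_{n_{0}},y}$. A polynomial-algebra homomorphism that carries a free generating set to part of a free generating set of the codomain is injective: concretely, $\mathbb{C}[X_{1},\dots,X_{n_{0}}] \to \mathbb{C}[X_{1},\dots,X_{n_{0}},X_{n_{0}+1}]$ sending $X_{i}\mapsto X_{i}$ is the inclusion, which is an embedding, and composing with the isomorphisms supplied by the Fundamental Theorem on both sides identifies $\Phi$ with this inclusion.

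I would then conclude that $\Phi$ is an algebra-monomorphism, which gives exactly (3.3.6): $\mathcal{S}_{x_{1},...,x_{n_{0}}} \overset{\text{Alg}}{\hookrightarrow} \mathcal{S}_{x_{1},...,x_{n_{0}},y}$. The main obstacle—such as it is—is the injectivity step, and the only subtlety there is making sure one is genuinely entitled to invoke algebraic independence of both generating sets; that is guaranteed by the Fundamental Theorem, which asserts an algebra-isomorphism with an honest polynomial ring, not merely a generating statement. Once that is in hand, everything else (linearity, multiplicativity, being well-defined on products via the universal property) is formal, and indeed was already spelled out in the discussion surrounding (3.3.5) together with the preceding theorem's identification (3.3.4). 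A brief remark worth including: the "extra" generator $\varepsilon_{n_{0}+1}^{x_{1},...,x_{n_{0}},y} = \varepsilon_{n_{0}}^{x_{1},...,x_{n_{0}}}\varepsilon_{1}^{y}$ is precisely what is not in the image, which is why the embedding is proper and why (3.3.4) is a disjoint union rather than an equality of generator sets.
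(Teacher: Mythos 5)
Your proposal is correct and follows essentially the paper's own route: the embedding is realized by the map $\Phi$ of (3.3.5), whose action on generators is the map $\alpha$ of (3.3.3), with the identification of images given by (2.2)/(3.3.4). The one point where you go beyond the paper is the injectivity step: the paper merely asserts that $\Phi$ is an algebra-monomorphism because $\alpha$ is injective on the generator set, while you correctly observe that this alone would not suffice and instead invoke the algebraic independence of the elementary symmetric polynomials on both sides (the Fundamental Theorem), identifying $\Phi$ with the inclusion $\mathbb{C}[X_{1},\dots,X_{n_{0}}]\hookrightarrow\mathbb{C}[X_{1},\dots,X_{n_{0}},X_{n_{0}+1}]$ -- a genuine improvement in rigor over the paper's one-line proof.
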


\begin{proof}
	By the algebra-monomorphism $\Phi $ of (3.3.5), $\mathcal{S}%
	_{x_{1},...,x_{n_{0}}}$ is algebra-monomorphic to $\mathcal{S}%
	_{x_{1},...,x_{n_{0}},y},$ equivalently, $\mathcal{S}_{x_{1},...,x_{n_{0}}}$
	is naturally embedded in $\mathcal{S}_{x_{1},...,x_{n_{0}},y}.$
\end{proof}

More precise to (3.3.6), we obtain the following structure theorem.

\begin{theorem}
	Let $\Phi $ be the algebra-monomorphism (or the embedding) (3.3.5) of $%
	\mathcal{S}_{x_{1},...,x_{n_{0}}}$ into $\mathcal{S}%
	_{x_{1},...,x_{n_{0}},y}. $ Then
	
	(3.3.7)
	
	\begin{center}
		$\mathcal{S}_{x_{1},...,x_{n_{0}},y}$ $\overset{\text{Alg}}{=}$ $\Phi \left( 
		\mathcal{S}_{x_{1},...,x_{n_{0}}}\right) \oplus \mathbb{C}\left[ \{\varepsilon
		_{n_{0}}^{x_{1},...,x_{n_{0}}}\varepsilon _{1}^{y}\}\right] ,$
	\end{center}
	
	where $\mathbb{C}[X]$ mean the algebras generated by sets $X,$ and $\oplus $
	means the (pure-algebraic) direct product on algebras.
\end{theorem}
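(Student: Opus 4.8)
The plan is to establish the decomposition (3.3.7) by checking that $\Phi\left(\mathcal{S}_{x_{1},...,x_{n_{0}}}\right)$ and $\mathbb{C}\left[\{\varepsilon_{n_{0}}^{x_{1},...,x_{n_{0}}}\varepsilon_{1}^{y}\}\right]$ are subalgebras of $\mathcal{S}_{x_{1},...,x_{n_{0}},y}$ that together generate everything and meet only in the scalars $\mathbb{C}$, so that the pure-algebraic direct product sits inside $\mathcal{S}_{x_{1},...,x_{n_{0}},y}$ and exhausts it. First I would invoke the Fundamental Theorem (Proposition 1.3): $\mathcal{S}_{x_{1},...,x_{n_{0}},y}$ is algebra-isomorphic to the polynomial algebra on the $n_{0}+1$ generators $\varepsilon_{1}^{x_{1},...,x_{n_{0}},y},\ldots,\varepsilon_{n_{0}+1}^{x_{1},...,x_{n_{0}},y}$, and these are algebraically independent. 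By Theorem 3.6, in particular (3.3.4), the generator set of $\mathcal{S}_{x_{1},...,x_{n_{0}},y}$ splits as $\alpha\left(\mathcal{E}_{x_{1},...,x_{n_{0}}}\right)\sqcup\{\varepsilon_{n_{0}}^{x_{1},...,x_{n_{0}}}\varepsilon_{1}^{y}\}$, and by (3.3.5) the image $\Phi\left(\mathcal{S}_{x_{1},...,x_{n_{0}}}\right)$ is exactly the subalgebra generated by $\alpha\left(\mathcal{E}_{x_{1},...,x_{n_{0}}}\right)=\{\varepsilon_{l}^{x_{1},...,x_{n_{0}},y}\}_{l=1}^{n_{0}}$.

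Next I would argue that $\Phi$ is genuinely injective (an algebra-monomorphism, as Corollary 3.7 asserts): since the $n_{0}$ elements $\varepsilon_{1}^{x_{1},...,x_{n_{0}},y},\ldots,\varepsilon_{n_{0}}^{x_{1},...,x_{n_{0}},y}$ are a subset of an algebraically independent generating set, they are themselves algebraically independent, so the linear multiplicative map sending the free generators $\varepsilon_{l}^{x_{1},...,x_{n_{0}}}$ to them has trivial kernel. Then I would note the complementary subalgebra: $\mathbb{C}\left[\{\varepsilon_{n_{0}}^{x_{1},...,x_{n_{0}}}\varepsilon_{1}^{y}\}\right]$ is, via the identification $\varepsilon_{n_{0}}^{x_{1},...,x_{n_{0}}}\varepsilon_{1}^{y}=\varepsilon_{n_{0}+1}^{x_{1},...,x_{n_{0}},y}$ coming from (2.2) and Notation 1.1, the polynomial algebra in the single remaining generator. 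Since the full set of $n_{0}+1$ generators is algebraically independent and partitions into these two blocks, every element of $\mathcal{S}_{x_{1},...,x_{n_{0}},y}$ has a unique expression as a $\mathbb{C}$-linear combination of products $p\cdot q$ with $p$ a monomial in the first $n_{0}$ generators and $q$ a power of $\varepsilon_{n_{0}+1}^{x_{1},...,x_{n_{0}},y}$; collecting terms shows $\mathcal{S}_{x_{1},...,x_{n_{0}},y}=\Phi\left(\mathcal{S}_{x_{1},...,x_{n_{0}}}\right)\cdot\mathbb{C}\left[\{\varepsilon_{n_{0}+1}^{x_{1},...,x_{n_{0}},y}\}\right]$ and that the two subalgebras intersect precisely in $\mathbb{C}$. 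This is exactly the condition for the internal pure-algebraic direct product, giving (3.3.7).

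The step I expect to be the main obstacle — or at least the one needing the most care — is pinning down precisely what ``$\oplus$, the pure-algebraic direct product on algebras'' means here and verifying $\mathcal{S}_{x_{1},...,x_{n_{0}},y}$ satisfies its universal property (equivalently, that it is the internal direct product of the two subalgebras). The subtlety is that an honest categorical product of two nontrivial unital commutative algebras is not a domain, whereas $\mathcal{S}_{x_{1},...,x_{n_{0}},y}$ is; so the intended meaning must be the tensor-product-style amalgamation $A\oplus B := A\otimes_{\mathbb{C}}B$ realized internally as $A\cdot B$ inside the ambient algebra when $A\cap B=\mathbb{C}$ and $A\cup B$ generates. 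I would therefore make this reading explicit, reduce the claim to the two assertions (i) $A\cap B=\mathbb{C}$ and (ii) the multiplication map $A\otimes_{\mathbb{C}}B\to\mathcal{S}_{x_{1},...,x_{n_{0}},y}$ is an isomorphism, and prove both via algebraic independence of the $n_{0}+1$ elementary symmetric polynomials of $\mathcal{S}_{x_{1},...,x_{n_{0}},y}$. Everything else — well-definedness of $\Phi$, its multiplicativity, and the containment of generators — is already supplied by Theorem 3.6 and the remarks following (3.3.5), so no further computation is needed.
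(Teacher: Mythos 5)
Your proposal is correct and follows essentially the same route as the paper: both split the generator set of $\mathcal{S}_{x_{1},...,x_{n_{0}},y}$ via (3.3.4), identify $\Phi\left(\mathcal{S}_{x_{1},...,x_{n_{0}}}\right)$ as the subalgebra generated by $\alpha\left(\mathcal{E}_{x_{1},...,x_{n_{0}}}\right)$, and read off the decomposition (3.3.7). The only difference is that you make explicit---via algebraic independence of the $n_{0}+1$ elementary symmetric polynomials of $\mathcal{S}_{x_{1},...,x_{n_{0}},y}$ and the internal tensor-product reading of $\oplus$---the justification that the paper compresses into the steps ``by construction'' and ``since $\Phi$ is an embedding,'' which is a welcome clarification rather than a divergence.
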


\begin{proof}
	\strut Note that
	
	$\qquad \qquad \mathcal{S}_{x_{1},...,x_{n_{0}},y}$ $=$ $\mathbb{C}\left[ 
	\mathcal{E}_{x_{1},...,x_{n_{0}},y}\right] $
	
	by (1.7)
	
	$\qquad \qquad \qquad =$ $\mathbb{C}\left[ \alpha \left( \mathcal{E}%
	_{x_{1},...,x_{n_{0}}}\right) \sqcup \{\varepsilon
	_{n_{0}}^{x_{1},...,x_{n_{0}}}\varepsilon _{1}^{y}\}\right] $
	
	by (3.3.4)
	
	$\qquad \qquad \qquad =$ $\mathbb{C}\left[ \mathbb{C}\left[ \alpha \left( \mathcal{%
		E}_{x_{1},...,x_{n_{0}}}\right) \right] \sqcup \mathbb{C}\left[ \varepsilon
	_{n_{0}}^{x_{1},...,x_{n_{0}}}\varepsilon _{1}^{y}\right] \right] $
	
	\strut by construction
	
	$\qquad \qquad \qquad =$ $\mathbb{C}\left[ \Phi \left( \mathcal{S}%
	_{x_{1},...,x_{n_{0}}}\right) \right] \oplus \mathbb{C}\left[ \varepsilon
	_{n_{0}}^{x_{1},...,x_{n_{0}}}\varepsilon _{1}^{y}\right] $
	
	by (3.3.5)
	
	$\qquad \qquad \qquad =$ $\Phi \left( \mathcal{S}_{x_{1},...,x_{n_{0}}}%
	\right) \oplus \mathbb{C}\left[ \varepsilon
	_{n_{0}}^{x_{1},...,x_{n_{0}}}\varepsilon _{1}^{y}\right] ,$
	
	since $\Phi $ is an embedding. Therefore, the isomorphism theorem (3.3.7) is
	obtained.
\end{proof}

\strut By (3.3.7) and (2.4), we obtain the following theorem, too.

\begin{theorem}
	Let $n$ $\in $ $\mathbb{N},$ and let $\mathcal{S}_{x_{1},...,x_{n+1}}$ be the
	symmetric algebra in $\{x_{1},$ $...,$ $x_{n},$ $x_{n+1}\}.$ Then
	
	(3.3.8)
	
	\begin{center}
		$\mathcal{S}_{x_{1},...,x_{n+1}}$ $\overset{\text{Alg}}{=}$ $\Phi \left( 
		\mathcal{S}_{x_{1},...,x_{j-1},x_{j+1},...,x_{n+1}}\right) $ $\oplus $ $\mathbb{%
			C}\left[ \varepsilon _{n}^{x_{1},...,x_{j-1},x_{j+1},...,x_{n}}\varepsilon
		_{1}^{x_{j}}\right] ,$
	\end{center}
	
	where $\Phi $ is in the sense of (3.3.5).
\end{theorem}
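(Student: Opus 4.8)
The plan is to reduce Theorem 3.9 to the already-established Theorem 3.8 by applying the symmetry relation (2.4) in place of (2.2). Recall that (3.3.7) was proved by writing $\mathcal{S}_{x_1,\dots,x_{n_0},y}$ in terms of its generator set $\mathcal{E}_{x_1,\dots,x_{n_0},y}$, decomposing that generator set via (3.3.4) as the image of $\alpha$ together with the single extra generator $\varepsilon_{n_0}^{x_1,\dots,x_{n_0}}\varepsilon_1^{y}$, and then recognizing $\mathbb{C}[\alpha(\mathcal{E}_{x_1,\dots,x_{n_0}})]$ as $\Phi(\mathcal{S}_{x_1,\dots,x_{n_0}})$. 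The only place where the adjoined variable $y$ played a distinguished role was through the recurrence (2.2). Since (2.4) gives exactly the same recurrence with the role of the ``last'' variable $x_{n+1}$ replaced by an arbitrary $x_j$, the entire argument transports verbatim.

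First I would fix $n \in \mathbb{N}$ and $j \in \{1,\dots,n+1\}$, and set $n_0 = n$, identifying the variable set $\{x_1,\dots,x_{j-1},x_{j+1},\dots,x_{n+1}\}$ with the ``base'' variables and $x_j$ with the adjoined variable $y$ in the setup of Section 3.3. Next I would invoke (2.4) with $i_0 = j$ to obtain, for $l = 1,\dots,n$,
\[
\varepsilon_l^{x_1,\dots,x_{n+1}} = \varepsilon_l^{x_1,\dots,x_{j-1},x_{j+1},\dots,x_{n+1}} + \varepsilon_{l-1}^{x_1,\dots,x_{j-1},x_{j+1},\dots,x_{n+1}}\,\varepsilon_1^{x_j},
\]
which is precisely the relation that makes $\alpha$ (in the sense of (3.3.3), with $U$ the shift on $\mathcal{S}_{x_1,\dots,x_{j-1},x_{j+1},\dots,x_{n+1}}$) land in the generator set $\mathcal{E}_{x_1,\dots,x_{n+1}}$ and account for all but one of its elements. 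The leftover generator is $\varepsilon_{n+1}^{x_1,\dots,x_{n+1}} = \varepsilon_n^{x_1,\dots,x_{j-1},x_{j+1},\dots,x_{n+1}}\,\varepsilon_1^{x_j}$, since the top elementary symmetric polynomial in $n+1$ variables is the full product. This yields the analogue of (3.3.4) in the reindexed setting.

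Then I would define $\Phi$ exactly as in (3.3.5) but over the base algebra $\mathcal{S}_{x_1,\dots,x_{j-1},x_{j+1},\dots,x_{n+1}}$, note it is a well-defined algebra-monomorphism by Corollary 3.7 (again applied with the base/adjoined split above), and run the same chain of equalities as in the proof of Theorem 3.8: rewrite $\mathcal{S}_{x_1,\dots,x_{n+1}}$ as $\mathbb{C}[\mathcal{E}_{x_1,\dots,x_{n+1}}]$ by (1.7), split the generator set by the reindexed (3.3.4), pass to $\mathbb{C}[\,\mathbb{C}[\alpha(\cdots)]\sqcup\mathbb{C}[\varepsilon_n^{\cdots}\varepsilon_1^{x_j}]\,]$, and identify $\mathbb{C}[\alpha(\mathcal{E})]$ with $\Phi(\mathcal{S}_{x_1,\dots,x_{j-1},x_{j+1},\dots,x_{n+1}})$ since $\Phi$ is an embedding, arriving at (3.3.8).

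I do not expect a genuine obstacle here; the content is a change of index, not a new idea. The one point requiring a little care is bookkeeping: the set $\{x_1,\dots,x_{j-1},x_{j+1},\dots,x_{n+1}\}$ has $n$ elements, so its top elementary symmetric polynomial is $\varepsilon_n$ (not $\varepsilon_{n+1}$), and the shift $U$ must be the shift on that $n$-variable algebra; writing the indices consistently — especially checking the edge cases $j=1$ and $j=n+1$, where the reindexing degenerates to the plain statement (3.3.7) — is the only thing to watch. Once the dictionary between (2.2) and (2.4) is in place, the proof is a one-line appeal to the argument of Theorem 3.8.
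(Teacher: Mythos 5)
Your proposal is correct and follows essentially the same route as the paper, whose proof of (3.3.8) is precisely the one-line reduction you describe: rerun the argument of (3.3.7) with the recurrence (2.4) (taking $i_{0}=j$) in place of (2.2), treating $x_{j}$ as the adjoined variable. Your added bookkeeping — that the base set has $n$ variables so the leftover generator is $\varepsilon_{n}^{x_{1},...,x_{j-1},x_{j+1},...,x_{n+1}}\varepsilon_{1}^{x_{j}}$ — is a faithful (indeed more careful) elaboration of what the paper leaves implicit.
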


\begin{proof}
	\strut The proof of the structure theorem (3.3.8) is done by that of (3.3.7)
	in terms of (2.4).
\end{proof}

\strut We finish this section with the following example.

\begin{example}
	Let $\mathcal{S}_{x_{1},x_{2},x_{3}}$ be the symmetric subalgebra in $%
	\{x_{1},x_{2},x_{3}\},$ with its generator set
	
	\begin{center}
		$\mathcal{E}_{x_{1},x_{2},x_{3}}$ $=$ $\{\varepsilon
		_{1}^{x_{1},x_{2},x_{3}},$ $\varepsilon _{2}^{x_{1},x_{2},x_{3}},$ $%
		\varepsilon _{3}^{x_{1},x_{2},x_{3}}\},$
	\end{center}
	
	where
	
	$\qquad \qquad \varepsilon _{1}^{x_{1},x_{2},x_{3}}$ $=$ $x_{1}+x_{2}+x_{3},$
	
	$\qquad \qquad \varepsilon _{2}^{x_{1},x_{2},x_{3}}$ $=$ $%
	x_{1}x_{2}+x_{1}x_{3}+x_{2}x_{3},$
	
	and
	
	$\qquad \qquad \varepsilon _{3}^{x_{1},x_{2},x_{3}}$ $=$ $x_{1}x_{2}x_{3}.$
	
	For the injective map $\alpha $ of (3.3.3), we have
	
	\begin{center}
		$\alpha (\varepsilon _{1}^{x_{1},x_{2},x_{3}})$ $=$ $\varepsilon
		_{1}^{x_{1},x_{2},x_{3}}+\varepsilon _{0}^{x_{1},x_{2},x_{3}}\varepsilon
		_{1}^{y}$ $=$ $\varepsilon _{1}^{x_{1},x_{2},x_{3},y}$
		
		$\alpha (\varepsilon _{2}^{x_{1},x_{2},x_{3}})$ $=$ $\varepsilon
		_{2}^{x_{1},x_{2},x_{3}}+\varepsilon _{1}^{x_{1},x_{2},x_{3}}\varepsilon
		_{1}^{y}$ $=$ $\varepsilon _{2}^{x_{1},x_{2},x_{3},y},$
	\end{center}
	
	and
	
	\begin{center}
		$\alpha (\varepsilon _{3}^{x_{1},x_{2},x_{3}})$ $=$ $\varepsilon
		_{3}^{x_{1},x_{2},x_{3}}+\varepsilon _{2}^{x_{1},x_{2},x_{3}}\varepsilon
		_{1}^{y}$ $=$ $\varepsilon _{3}^{x_{1},x_{2},x_{3},y},$
	\end{center}
	
	inducing
	
	\begin{center}
		$\alpha (\mathcal{E}_{x_{1},x_{2},x_{3}})$ $\sqcup $ $\{\varepsilon
		_{4}^{x_{1},x_{2},x_{3},y}$ $=$ $x_{1}x_{2}x_{3}y\}$ $=$ $\mathcal{E}%
		_{x_{1},x_{2},x_{3},y}.$
	\end{center}
	
	So,
	
	\begin{center}
		$
		\begin{array}{ll}
		\mathcal{S}_{x_{1},x_{2},x_{3},y} & =\mathbb{C}\left[ \alpha (\mathcal{E}%
		_{x_{1},x_{2},x_{3}})\sqcup \{x_{1}x_{2}x_{3}y\}\right] \\ 
		&  \\ 
		& =\Phi \left( \mathcal{S}_{x_{1},x_{2},x_{3}}\right) \oplus \mathbb{C}%
		[x_{1}x_{2}x_{3}y].
		\end{array}
		$
	\end{center}
	
	\strut
\end{example}	

\section{\strut Applications}
In this section, we consider applications of (3.3.7) based on (2.2).

\subsection{Zeroes of Single-Variable Polynomials}

Let $f(z)$ be a degree-$n$ single-variable $\mathbb{C}$-polynomial, i.e., $f$ $%
\in $ $\mathcal{A}_{z}$ $=$ $\mathbb{C}[z].$ By the fundamental theorem of
algebra, $f(z)$ has its zeroes $\lambda _{1},$ ..., $\lambda _{n}$ (without
considering multiplicities), i.e.,

\begin{center}
	$f(z)$ $=$ $\overset{n}{\underset{j=1}{\Pi }}\left( z-\lambda _{j}\right) .$
\end{center}

For convenience, let

\begin{center}
	$x_{j}$ $=$ $-\lambda _{j},$ for $j$ $=$ $1,$ ..., $n.$
\end{center}

Then

(4.1.1)

\begin{center}
	$f(z)$ $=$ $\overset{n}{\underset{j=1}{\Pi }}\left( z+x_{j}\right) $ $=$ $%
	\sum_{k=0}^{n}\varepsilon _{k}^{x_{1},...,x_{n}}$ $z^{n-k}$
\end{center}

in $\mathcal{A}_{z},$ where $\varepsilon _{0}^{x_{1},...,x_{n}}$ $=$ $1,$ and

(4.1.2)

\begin{center}
	$
	\begin{array}{ll}
	\varepsilon _{k}^{x_{1},...,x_{n}} & =\varepsilon
	_{k}(x_{1},...,x_{n})=(-1)^{k}\varepsilon _{k}\left( \lambda _{1},\text{
		..., }\lambda _{n}\right) \\ 
	& =(-1)^{k}\varepsilon _{k}^{\lambda _{1},...,\lambda _{n}},
	\end{array}
	$
\end{center}

for $k$ $=$ $1,$ ..., $n.$

Now, suppose $f(z)$ is arbitrarily given (without knowing zeroes of $f(z)$)
in $\mathcal{A}_{z}$. Then one can construct the polynomial $f_{\lambda
	_{n+1}}(z)$ whose zeroes are the zeroes of $f(z)$ and $\lambda _{n+1}$ in $%
\mathbb{C}.$

\begin{proposition}
	Let $f(z)$ be a degree-$n$ polynomial $\sum_{k=0}^{n}t_{k}z^{n-k}$ in $%
	\mathcal{A}_{z},$ and let $\lambda _{n+1}$ $\in $ $\mathbb{C}.$ Then there
	exists a polynomial
	
	\begin{center}
		$f_{\lambda _{n+1}}(z)$ $=$ $\sum_{k=0}^{n+1}s_{k}$ $z^{(n+1)-k}$ $\in $ $%
		\mathcal{A}_{z}$
	\end{center}
	
	whose zeroes are the zeroes of $f(z)$ and $\lambda _{n+1}$ in $\mathbb{C}.$ In
	particular,
	
	\begin{center}
		$s_{0}$ $=$ $1,$
		
		$s_{k}$ $=$ $t_{k}-t_{k-1}\lambda _{n+1},$
	\end{center}
	
	for all $k$ $=$ $1,$ ..., $n,$ with additional identity: $t_{-1}=0,$ and
	
	\begin{center}
		$s_{n+1}$ $=$ $-t_{n}\lambda _{n+1},$
	\end{center}
	
	in $\mathbb{C}.$
\end{proposition}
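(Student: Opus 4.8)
The plan is to take the obvious candidate $f_{\lambda _{n+1}}(z):=(z-\lambda _{n+1})\,f(z)$, which by construction vanishes precisely at the zeroes of $f(z)$ together with $\lambda _{n+1}$, and then to read off its coefficients $s_{k}$. There are two routes to the coefficient formulas, and I would present the second, since it ties the computation directly to Theorem 2.1.

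The first (direct) route: writing $f(z)=\sum_{k=0}^{n}t_{k}z^{n-k}$ with $t_{0}=1$ (the leading coefficient, as in (4.1.1)), expand $(z-\lambda _{n+1})\sum_{k=0}^{n}t_{k}z^{n-k}=\sum_{k=0}^{n}t_{k}z^{(n+1)-k}-\lambda _{n+1}\sum_{k=0}^{n}t_{k}z^{n-k}$, reindex the second sum by $j=k+1$ so that it becomes $-\lambda _{n+1}\sum_{j=1}^{n+1}t_{j-1}z^{(n+1)-j}$, and collect the coefficient of $z^{(n+1)-k}$. The boundary index $k=0$ receives a contribution only from the first sum, giving $s_{0}=t_{0}=1$; the boundary index $k=n+1$ receives a contribution only from the second sum, giving $s_{n+1}=-\lambda _{n+1}t_{n}$; and for $k=1,\dots ,n$ both sums contribute, giving $s_{k}=t_{k}-\lambda _{n+1}t_{k-1}$ (which also subsumes the case $k=1$ under the convention $t_{-1}=0$).

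The second route (via (2.2)): by the fundamental theorem of algebra, $f(z)=\prod_{j=1}^{n}(z-\lambda _{j})$ for some $\lambda _{1},\dots ,\lambda _{n}\in \mathbb{C}$, so, setting $x_{j}=-\lambda _{j}$ as in (4.1.1)--(4.1.2), identity (4.1.1) gives $t_{k}=\varepsilon _{k}^{x_{1},\dots ,x_{n}}$. Setting also $x_{n+1}=-\lambda _{n+1}$, we have $f_{\lambda _{n+1}}(z)=\prod_{j=1}^{n+1}(z-\lambda _{j})=\sum_{k=0}^{n+1}\varepsilon _{k}^{x_{1},\dots ,x_{n+1}}z^{(n+1)-k}$, hence $s_{k}=\varepsilon _{k}^{x_{1},\dots ,x_{n+1}}$. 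Now apply Theorem 2.1 with $n$ replaced by $n+1$ (or Theorem 2.2 with $i_{0}=n+1$): $\varepsilon _{k}^{x_{1},\dots ,x_{n+1}}=\varepsilon _{k}^{x_{1},\dots ,x_{n}}+\varepsilon _{k-1}^{x_{1},\dots ,x_{n}}\varepsilon _{1}^{x_{n+1}}$. Since $\varepsilon _{1}^{x_{n+1}}=x_{n+1}=-\lambda _{n+1}$ and, by Notation 1.1, $\varepsilon _{0}^{x_{1},\dots ,x_{n}}=1$ and $\varepsilon _{n+1}^{x_{1},\dots ,x_{n}}=0$, the three asserted formulas for $s_{0}$, for $s_{k}$ with $1\le k\le n$, and for $s_{n+1}$ fall out immediately.

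No step here is a real obstacle: the proposition is essentially relation (2.2) transcribed into the language of coefficients of single-variable polynomials. The only points requiring care are the bookkeeping at the extreme indices $k=0$ and $k=n+1$, and tracking the sign introduced by the substitution $x_{j}=-\lambda _{j}$; in the direct route, the reindexing $j=k+1$ of the second sum is the one place where an off-by-one error could creep in.
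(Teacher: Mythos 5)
Your second route is essentially the paper's own proof: the paper also invokes the fundamental theorem of algebra, sets $x_{j}=-\lambda_{j}$ so that $t_{k}=\varepsilon_{k}^{x_{1},\dots,x_{n}}$, and then obtains $s_{k}=t_{k}+t_{k-1}x_{n+1}$ via the map $\alpha$ of (3.3.3)--(3.3.4), which is just relation (2.2) applied with the extra variable $x_{n+1}=-\lambda_{n+1}$, together with $\varepsilon_{0}=1$ and $\varepsilon_{n+1}^{x_{1},\dots,x_{n}}=0$ at the boundary indices. Your first (direct expansion) route is a correct and even more elementary alternative, but since you state you would present the second, the argument matches the paper's and is complete, including the sign bookkeeping from $x_{j}=-\lambda_{j}$ and the implicit monic normalization $t_{0}=1$.
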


\begin{proof}
	\strut Assume $f(z)$ is a degree-$n$ polynomial in $\mathcal{A}_{z}$ and $%
	\lambda _{n+1}$ $\in $ $\mathbb{C}$ are arbitrarily fixed, and suppose
	
	\begin{center}
		$f(z)$ $=$ $\sum_{k=0}^{n}t_{k}$ $z^{n-k}$ in $\mathcal{A}_{z}.$
	\end{center}
	
	By the fundamental theorem of algebra, $f(z)$ has its $n$-zeroes $\lambda
	_{1},$ ..., $\lambda _{n}$ (without considering the multiplicities). If we
	let $x_{j}$ $=$ $-\lambda _{j}$ in $\mathbb{C},$ for $j$ $=$ $1,$ ..., $n,$ then
	
	(4.1.3)
	
	\begin{center}
		$f(z)$ $=$ $\sum_{k=0}^{n}$ $t_{k}$ $z^{n-k}$ with $t_{k}$ $=$ $\varepsilon
		_{k}^{x_{1},...,x_{n}},$
	\end{center}
	
	by (4.1.1) satisfying (4.1.2). So, one can construct
	
	(4.1.4)
	
	$\qquad \qquad \varepsilon _{0}^{x_{1},...,x_{n},x_{n+1}}$ $=$ $1,$
	
	$\qquad \qquad \varepsilon _{k}^{x_{1},...,x_{n},x_{n+1}}$ $=$ $\alpha
	\left( \varepsilon _{k}^{x_{1},...,x_{n}}\right) ,$ for $k$ $=$ $1,$ ..., $%
	n, $
	
	where $\alpha $ is in the sense of (3.3.3), and
	
	$\qquad \qquad \varepsilon _{n+1}^{x_{1},...,x_{n},x_{n+1}}$ $=$ $%
	\varepsilon _{n}^{x_{1},...,x_{n}}x_{n+1}$ $=$ $x_{1}x_{2}...x_{n}x_{n+1},$
	
	by (3.3.4) (and by (3.3.7)), for
	
	\begin{center}
		$x_{n+1}$ $=$ $-$ $\lambda _{n+1}$ in $\mathbb{C}.$
	\end{center}
	
	Then, by (4.1.3), the quantities (4.1.4) are
	
	(4.1.5)
	
	$\qquad \qquad s_{0}$ $=$ $1,$
	
	$\qquad \qquad s_{k}$ $=$ $\alpha (t_{k})$ $=$ $t_{k}+t_{k-1}x_{n+1},$ for $%
	k $ $=$ $1,$ ..., $n,$
	
	and
	
	$\qquad \qquad s_{n+1}$ $=$ $t_{n}x_{n+1},$
	
	where $x_{n+1}$ $=$ $-\lambda _{n+1}.$
	
	In other words, one can construct the degree-$(n+1)$ polynomial
	
	\begin{center}
		$f_{\lambda _{n+1}}(z)$ $=$ $\sum_{k=0}^{n+1}$ $s_{k}$ $z^{(n+1)-k},$
	\end{center}
	
	whose constant term $s_{0}$ and coefficients $s_{j}$ satisfy (4.1.5) in $%
	\mathcal{A}_{z},$ such that the zeroes of $f_{\lambda _{n+1}}(z)$ are the
	zeroes of $f(z)$ and a given $\mathbb{C}$-quantity $\lambda _{n+1}.$
\end{proof}

\strut The above proposition is illustrated in the following example.

\begin{example}
	Let $f(z)$ $=$ $z^{4}-2z^{2}+z+3$ in $\mathcal{A}_{z},$ and let $i$ $\in $ $%
	\mathbb{C}.$ One can let
	
	\begin{center}
		$t_{0}$ $=$ $1,$ $t_{1}=-2,$ $t_{2}$ $=$ $0,$ $t_{3}$ $=$ $1,$ and $t_{4}$ $%
		= $ $3,$
	\end{center}
	
	in $\mathbb{C}.$ Then, for the fixed $\mathbb{C}$-quantity $i$, we have
	
	$\qquad \qquad \qquad s_{0}$ $=$ $1,$
	
	$\qquad \qquad \qquad s_{1}$ $=$ $t_{1}+$ $t_{0}i$ $=$ $-2+i,$
	
	$\qquad \qquad \qquad s_{2}$ $=$ $t_{2}+t_{1}i$ $=$ $0+(-2)i$ $=$ $-2i,$
	
	$\qquad \qquad \qquad s_{3}$\strut $=$ $t_{3}+t_{2}i$ $=$ $1+0i$ $=$ $1,$
	
	$\qquad \qquad \qquad s_{4}$ $=$ $t_{4}+t_{3}i$ $=$ $3+i,$
	
	and
	
	$\qquad \qquad \qquad s_{5}$ $=$ $t_{4}i$ $=$ $3i,$
	
	in $\mathbb{C}$, inducing a new degree-4 polynomial $f_{i}(z)$
	
	\begin{center}
		$f_{i}(z)$ $=$ $\sum_{k=0}^{5}s_{k}z^{5-k}$ $=$ $%
		z^{5}+(-2+i)z^{4}-2iz^{3}+z^{2}+i$ $z$ $+$ $3i.$
	\end{center}
	
	in $\mathcal{A}_{z}.$ Then this new degree-4 polynomial
	
	\begin{center}
		$z^{5}+(-2+i)z^{4}-2iz^{3}+z^{2}+i\,z$ $+$ $3i$
	\end{center}
	
	has its zeroes $i$ and all zeroes of $f(z).$
\end{example}

\strut The above proposition allows us to construct a degree-$(n+1)$
polynomial $f_{\lambda }(z)$ whose zeroes are the zeroes of $f(z)$ and $%
\lambda ,$ even though we do not know the zeroes of a fixed degree-$n$
polynomial $f(z)$.

\subsection{\strut JAVA Algorithm for $\mathbb{C}$-Quantities from Elementary
	Symmetric Polynomials}

In this section, as an application of (3.3.7) and (2.2), we establish a
computer algorithm for finding quantities from elementary symmetric
polynomials. This computer algorithm is constructed by the program language,
JAVA version 8.

\strut \strut

\strut \strut

\textbf{JAVA }(v.8)\textbf{\ Program}: Computing $\mathbb{C}$-quantities from
Elementary Symmetric Polynomials.

\strut \strut

import java.util.Scanner;

public class ComplexRecursiveAlgo \{

public static void main(String[] args) \{

Scanner in = new Scanner(System.in);

//Prompt for number of generators, n.

System.out.print(''Enter number of generating variables: '');

int n = in.nextInt();

System.out.println();

//Prompt for the n known generators, vector X.

System.out.print(''Enter the values of the n generators as a white space
separating list: '');

long[][] X = new long[n][2];

for(int i=0; i \textless n; i++)\{

X[i][0] = in.nextInt();

X[i][1] = in.nextInt();

\}

System.out.println();

//Recursively solve for the values of the elementary symmetric functions and
store in 2D array

long[][][] epsilon = new long[n][][];

for(int i=0; i \textless n; i++)

epsilon[i] = new long[i+1][2];

epsilon[0][0] = X[0];

for(int i=1; i \textless n; i++)\{

for(int k=0; k \textless epsilon[i].length; k++)\{

if(k-1 \textless 0)

epsilon[i][k] = add(epsilon[i-1][k], X[i]); //Since epsilon[i-1][k] = 1 for
k \textless 0

else if(k \textgreater i-1)

epsilon[i][k] = multiply(epsilon[i-1][k-1],X[i]); //Since epsilon[i-1][k] =
0 for k \textgreater i-1

else

epsilon[i][k] = add(epsilon[i-1][k], multiply(epsilon[i-1][k-1],X[i]));

\}

\}

//Provide values of epsilon to user

System.out.print(''Enter the values of n and k for the desired iteration:
'');

int N = in.nextInt() - 1;

int K = in.nextInt() - 1;

System.out.println();

System.out.println(''epsilon['' + (N+1) + '']['' + (K+1) + ''] = ('' +
epsilon[N][K][0] + '','' + epsilon[N][K][1] + '')'');

\}

public static long[] add(long[] z1, long[] z2)\{

long[] w = new long[2];

w[0] = z1[0] + z2[0];

w[1] = z1[1] + z2[1];

return w;

\}

public static long[] multiply(long[] z1, long[] z2)\{

long[] w = new long[2];

w[0] = z1[0]*z2[0] - z1[1]*z2[1];

w[1] = z1[0]*z2[1] + z1[1]*z2[0];

return w;

\}

\}

return w;

\}

\}

\strut

\strut \strut

\strut \strut \strut

\section*{}~
   Mr. Ryan Golden is an undergraduate at Saint Ambrose University.  He has worked  under Dr. Ilwoo Cho for the past two years and attributes his decision to pursue a PhD in applied math largely to this partnership.  His particular interests include dynamical systems theory, stochastic processes, and statistical learning theory.
   
   Ph.D. Ilwoo Cho has been a faculty member of the department of mathematics and statistics at Saint Ambrose University since 2005.  His research interests include free probability, operator algebra and theory, combinatorics, and groupoid dynamical systems.

\subsection*{}
   Ryan Golden, 536 Carlsbad Tr., Roselle, IL 60172, USA
   
   goldenryanm@sau.edu

\subsection*{}
   Ilwoo Cho, Dept. of Mathematics, Saint Ambrose University, 518 w. Locust St., Davenport, IA 52803, USA
   
   choilwoo@sau.edu

\end{document}